 \def\beql#1#2\eeql{\begin{equation}\label{#1}#2\end{equation}}
\newcommand{\void}{0 }
\newcommand{\emptyword}{\epsilon}
\DeclareMathOperator{\PP}{PPerm}
\DeclareMathOperator{\PM}{Inj}
\DeclareMathOperator{\pos}{pos}
\DeclareMathOperator{\Sym}{Sym}
\newtheorem{theorem}{Theorem}[section]
\newcommand{\bew}{\noindent\underline{Proof.}\ }
\newtheorem{remark}[theorem]{Remark}
\newtheorem{lemma}[theorem]{Lemma}
\newtheorem{defn}[theorem]{Definition}
\newcommand{\disj}{\stackrel{.}{\cup}}
\newcommand{\N}{{\mathbb{N}}}
\newcommand{\eb}{\phantom{zzz}\hfill{$\square $}\smallskip}
\newcommand{\cB}{{\mathcal B}}
\newcommand{\cD}{{\mathcal D}}
\newcommand{\cF}{{\mathcal F}}
\newcommand{\cG}{{\mathcal G}}
\newcommand{\cJ}{{\mathcal J}}
\newcommand{\cM}{{\mathcal M}}
\newcommand{\cZ}{{\mathcal Z}}
\newcommand{\cj}{{\sf j}}
\newcommand{\PCT}{{\rm PCT}}
\newcommand{\II}{{\Lambda}} 
\newcommand{\ii}{{\lambda}} 
\newcommand{\NN}{{N_{\max}}} 
\newcommand{\Nht}{{N_{ht}}} 
\title{A method for building permutation representations of finitely presented groups}
\author{Gabriele Nebe, Richard Parker, and Sarah Rees} 
\begin{document}
\maketitle
\textsc{
Lehrstuhl D f\"ur Mathematik, RWTH Aachen University,
52056 Aachen, Germany}

\emph{E-mail address}{:\;\;}\texttt{nebe@math.rwth-aachen.de}
\medskip

\textsc{
70 York St. Cambridge CB1 2PY, UK}

\emph{E-mail address}{:\;\;}\texttt{richpark54@hotmail.co.uk}
\medskip

\textsc{
School of Mathematics, University of Newcastle, Newcastle NE1 7RU, UK}

\emph{E-mail address}{:\;\;}\texttt{Sarah.Rees@ncl.ac.uk}
\medskip

{\sc Abstract.} 
We design an algorithm to find
certain partial permutation representations of a finitely 
presented group $G$ (the bricks) that may be combined 
to a transitive permutation representation of $G$ (the mosaic) on the disjoint union. 

{\sc Keywords.} finitely presented groups; computational methods; permutation representation 

{\sc 2000 MSC: 20F05; 20B40} 

\section{Introduction}
\label{sec:intro}

For a group $G$ described by a finite presentation $\langle x_1,\ldots,x_n \mid r_1,\ldots,r_m\rangle$, it is often useful to know which
groups arise as quotients (equivalently, as homomorphic images).

Two well studied techniques
investigate the occurrence of a given finite permutation group $H$
as an image of $G$,
namely the `low index subgroup algorithm'  and Holt's `permutation image' program;
both are widely described in the literature, for example in \cite{Holt},
and well developed as algorithms.
For any given pair of groups $G,H$ as above, either of these two
techniques can be used to give a definite answer to the
question of whether or not $G$ has $H$ as a finite homomorphic image.
But each technique requires a systematic search, and this can be slow.

Using a commutative algebra technique, Plesken developed an 
algorithm  to classify all homomorphic images isomorphic to a linear group of degree 2 or 3,
see \cite{PleFab} and \cite{L2Q}.
This algorithm also finds certain
infinite homomorphic images; 
others can be found using Plesken's soluble quotient algorithm \cite{SOLQ}.

An alternative method, used (so far) to find finite images of $G$,
constructs permutation images as coset diagrams.
This technique was used by Stothers \cite{Stothers1,Stothers2}
to study subgroups of the modular
group and Hurwitz group (and attributed by him to Conway, and Singerman \cite{Singerman}) and subsequently extensively developed by Higman and others
\cite{Conder1,Conder2,Everitt1}. 

Higman suggested the use of coset diagrams to study images of the
modular group, the Hurwitz group and other triangle groups. Higman's ideas were
developed by Conder, Mushtaq, Servatius, Everitt
 and Kousar in \cite{Conder1,Conder2,Everitt1,Everitt2,Kousar,MushtaqServatius}
; in \cite{Everitt2} the method produced a positive proof to Higman's
conjecture that every Fuchsian group has all but finitely many alternating
groups amongst its homomorphic images; the result was then extended to
non-Euclidean crystallographic groups in Everitt's student Kousar's thesis
\cite{Kousar}.

The coset diagram technique is one that works well for a group $G$ with a small presentation.
An action of such a group $G$ on the cosets of a subgroup of index $n$ is
described by a diagram with $n$ vertices and directed arcs labelled by generators between
vertices, that satisfies certain conditions. Properties of the associated
homomorphic image can easily be deduced from properties of the diagram. Various
techniques allow diagrams to be joined together to produce actions of
higher degrees \cite{Stothers2,Conder1,Everitt2}, or to be modified to produce actions of other groups with
presentations similar to those of $G$ \cite{Everitt2,HoltPlesken}.

Our article is motivated by a study of the coset diagram technique,
and in particular the methods used to combine diagrams.

We suppose that $G=\langle X \mid R \rangle$ is a finitely presented group.
Our aim is to describe how to construct 
a transitive permutation representation $\phi:G \rightarrow \Sym(\cM)$ (the {\bf mosaic})
for the group $G$
as a combination of (partial) 
 permutation representations (the {\bf bricks}) 
$P_B: G \rightarrow \Sym(\Omega_B)$ 
of a certain form.
To combine the bricks and form a mosaic,
we need to be able to `jump' from one brick to another,
at points to which `pieces of cement' (from a set $C$)  are attached. {\bf Jump data} needs to be designed by the user 
to determine via which generators we are allowed
to jump, as well as conditions under which we must `stay'
within a brick; given a set $\cJ$ of jump data, we define an associated 
groupoid $\cF(\cJ)$. 
Assuming compatibility between $\cJ$ and $R$, the set
$R$ of relators for $G$ determines a set ${\frak R}$ of words over the elements
of $\cJ$ that define products in $\cF(\cJ)$.
We define the {\bf jump groupoid} $\cG(\cJ,{\frak R})$
as the quotient of $\cF(\cJ)$ by the normal closure of $\langle \frak{R}\rangle$.

The bricks that are used to build the mosaic
are, in effect, special partial permutation representations with
{\bf handles} 
 attaching cement to the underlying set  in a way that is
compatible with the jump data.
Section~\ref{sec:brickfinder} describes an algorithm that takes as
input a presentation for a group and associated jump data,
and outputs all possible bricks
up to some given degree. 

In section~\ref{sec:mosaic}, we assume that we have compatible jump data
for $G$  and a set $\cB$ of bricks. Then 
the {\bf construction instruction} defines a groupoid action of the 
jump groupoid $\cG(\cJ,{\frak R})$ on the 
set of all handles in all bricks of $\cB $ and tells us how  
to construct the associated mosaic, which is a permutation 
representation of $G$ on the disjoint union $\bigcup _{B\in \cB} \Omega _B$.

The final section of the paper, Section~\ref{sec:examples}
illustrates the method and the notions introduced in Section~\ref{sec:theory} 
by elaborating some easy examples. 
Among other things we show how to use the new technique to 
prove that a certain finitely generated group has 
almost all alternating groups as a quotient. 

\section{The construction of mosaics} 
\label{sec:theory}

\subsection{Jump data for $F(X)$} 
\label{sec:jumpdata}

Let $X$ be a finite set, closed under inverses. 
Instead of working with a free group on the set $X$ we will 
work 
with an {\bf inv-tab group} F(X), which we define to be the free group with its table of inverses adjoined. 
Note that if $x=x^{-1}$, we obtain 
the relation $x^2=1$ in $F(X)$. 
Where $G= \langle X \mid R \rangle$ is a finitely presented group, $G$
arises as a quotient of
the inv-tab group $F(X)$. The jump data that we use is associated with $F(X)$
rather than with $G$, but needs to be consistent with $R$.

We define {\bf jump data} for the inv-tab group $F(X)$ to
consist of a triple $\cJ=(C,J,S)$ of sets, and associated
maps.

$C$ is  a finite set of {\bf pieces of cement}, equipped with
an involution $\overline{\phantom{c}}$ on $C$, a map $\xi : C \to X $ satisfying 
$\xi(\overline{c}) = \xi (c) ^{-1 },$ and a bijection $\cj$ from $C$ to the
set of triples 
\[ J=\cj (C)=\{\cj(c):=(c,\xi(c),\overline{c}) : c \in C \};\]
we call $J$
the set of {\bf jumps}
, and we call a generator $x$ {\bf cementable} if it is an image $\xi(c)$
of a piece of cement.
Note that $J$ admits an involution that swaps $\cj(c) = (c,x,\overline{c})$ and
$\cj(\overline{c}) =
(\overline{c},x^{-1},c)$. 

$S$, the set of {\bf stays}, is a subset of
$C \times F(X) \times C$ that
is required to be closed under inverses, that is, 
\[(c , {\sf w} , c' ) \in S \Rightarrow (c',{\sf w}^{-1} , c) \in S .\]
We require that $S$ satisfies the following {\bf consistency condition}: 
\begin{quote}
If $(c , {\sf w_1} , c_1 )$ and $(c,{\sf w_2},c_2)$ are distinct stays in $S$, then ${\sf w_1}$ and ${\sf w_2}$ are incomparable (that is, neither is a prefix, empty or otherwise, of the other).
\end{quote}
This consistency condition is needed so that whenever we evaluate the word ${\sf w_1}$ at
some point of our brick cemented by $c$, then we never reach another cement point until the end of 
${\sf w}_1$ when we reach a point cemented by $c_1$. 

Let $\sim_S $ be the equivalence relation on $C$ that is the reflexive and
transitive closure of
the binary symmetric relation relating $c,c'$ whenever
some triple $(c,{\sf w},c')$ is in $S$.
We write $C= C_1\disj \ldots \disj C_\Nht $ as the disjoint union of the 
equivalence classes, $C_i$. 
We see the elements of each class $C_i$ as vertices of a 
connected, directed, labelled graph, which we call a {\bf handle type}. 
The handle type $H_i$ has vertex set $C_i$ and edges those stays 
$(c,{\sf w},c') $ for which $c,c'\in C_i$. 

We define an inv-tab groupoid ${\cF} (\cJ)$
 on the set 
$$J = \{ \ _n \cj (c) _m : c \in C_n ,\overline{c} \in C_m , 1\leq n,m \leq \Nht \} $$
(the subscript notation identifies $n$ as the source and $m$ as the target 
of $\cj (c)$),
consisting of all words $\cj(c_1) \ldots \cj(c_s)$ with 
$\overline{c_i}  \sim_S  c_{i+1} $ for all $1\leq i < s$.
The inverse of the generator $\cj(c)$ is $\cj(c)^{-1} = \cj(\overline{c}) $.

\subsection{The groupoid relators}  
\label{sec:relators}

Now let $R\subset F(X)$ be a subset of
the inv-tab group on $X$ and 
 $$G:=\langle X \mid R \rangle $$ the finitely presented group defined by $R$. 

We will assume without loss of generality that the  set $R$ 
of relators is closed under inversion and rotation, 
(so $R=R^{-1}$ and 
if $x_1x_2\ldots x_n \in R $ then also $x_nx_1x_2\ldots x_{n-1} \in R $).
If $r=(x_1x_2\ldots x_t)^{m}$ is a perfect power of a primitive word $r':= x_1x_2\ldots x_t$,
then its 
rotational equivalence class contains exactly $t$ elements
$$\circ ((x_1x_2\ldots x_t)^{m}) = \{ r_k := (x_{k} x_{k+1} \ldots x_t x_1\ldots x_{k-1})^m : 1\leq k \leq t \} .$$
We call $\{1,\ldots , t\} $ the different positions of $r$,
and, for $x\in X$, we define 
$$\pos (x,r) := \{ 1\leq i \leq t : x_i = x\} $$
the set of positions in $r'$ in which the generator $x$ occurs.

Suppose now that jump data $\cJ= (C,J,S)$ is given.
\begin{defn}
We say that $\cJ$ is {\bf compatible with $R$} if for each $r \in R$, for each $c \in C$,
and for each $k \in \pos(\xi(c),r)$, the $k$-th cyclic shift $r_k$ of $r$ can
be factorised as a product
$x_{i_1}{\sf w}_1x_{i_2}{\sf w}_2\cdots x_{i_s}{\sf w}_s$
where, 
for each $1\leq j \leq s $, 
we have $x_{i_j} = \xi(c_j)$ for some $c_j \in C$, $c_1=c$,
and $ (\overline{c_j} , {\sf w}_j, c_{j+1} ) \in S $
(where addition is taken $\bmod\,s$). 
We say that such a factorisation of $r$ is {\bf compatible with $\cJ$}
\end{defn}
Note that the consistency condition on $S$ ensures that, whenever a cyclic shift of a relator
admits a factorisation as above, then it is the unique such factorisation associated with $c$ and $k$.
Note also that the factorisation of a relator $r$ that is a proper power $(r')^m$ 
may well not be a power of a factorisation of $r'$.
We shall need $\cJ$ to be compatible with $R$ in order for our construction to work.

So now, assuming that $\cJ$ is compatible with $R$,
then for each $r \in R$, and each $r_k \in o(r)$ as above,
we define a product
${\frak r}_k : = \cj(c_1) \cdots \cj(c_s)$ in the inv-tab groupoid.
We let ${\frak R}$ be the set of all such products (for all $r \in R$),
and we define
the {\bf jump groupoid} associated with $G=\langle X \mid R\rangle $ and the
jump data $\cJ$
to be the
quotient
$${\cG}(\cJ,{\frak R}) := \cF (\cJ) / \langle {\frak R}
 \rangle ^N $$
of the inv-tab groupoid $\cF(\cJ)$.
Note that for all elements of ${\frak R}$, source and target 
coincide, so these live in subgroups of the groupoid, in which we
take the normal closure $\langle {\frak R} \rangle ^N$
  of the group generated by these elements of 
${\frak R}$ to obtain $\cG (\cJ ,{\frak R})$ as a quotient of 
$\cF(\cJ)$.

\subsection{Bricks} 
\label{sec:bricks}
Now suppose that $G$ and jump data $\cJ=(C,J,S)$ for $X$ are specified.
A brick is, essentially, a `cemented' partial permutation representation of $G$
that is compatible with the jump data, and which contains embedded images
of handles.

For  sets $\Omega, \Omega '$, we 
let $\PM(\Omega,\Omega ')$ be the set of injective maps from $\Omega$ to
$\Omega'$. 
We define a partial permutation $\pi $ 
of a set $\Omega$ to be a mapping from
$\Omega $ to $\Omega \cup \{ \void \} $ such that for all $\omega \in \Omega $
the preimage
$\pi ^{-1} (\{ \omega \} )$ has at most one element;
we write $\PP(\Omega)$ for the set of such partial permutations.
A map $\Pi:X \rightarrow \PP(\Omega)$
defines a partial permutation representation of $G$ on a set $\Omega$
if for all
$x_1,\ldots , x_s \in X$ and all $\omega \in \Omega $ 
such that 
\[ \omega\cdot \Pi(x_1),\,\omega\cdot \Pi(x_1) \Pi (x_2),\ldots ,
\omega\cdot \Pi(x_1) \cdots \Pi (x_s) =: \omega\cdot \Pi({\sf w})\]
are all defined as elements of $\Omega$,
in which case we 
say that a word ${\sf w} = x_1\ldots  x_s$ {\bf stays within $\Omega$ 
from the point $\omega \in \Omega $}, 
then $\omega\cdot \Pi({\sf w})$ only depends on ${\sf w} \in G$. 
The partial permutation representation $\Pi $ is called {\bf transitive},
if for all $\omega, \omega' \in \Omega $ there is some word
${\sf w}$ staying in $\Omega $ from $\omega $, such that $\omega \cdot \Pi ({\sf w}) = \omega '$.

\begin{defn} \label{defn:brick} 
Suppose that $\Pi:X \rightarrow \PP(\Omega)$ defines a  transitive
partial permutation representation of $G$. 
\begin{itemize}
\item[(B1)] 
Given a handle type $H_i$ of $\cJ$ with vertex set $C_i$, a map
$\theta: C_i \rightarrow \Omega$
is called a {\bf handle}  of type $H_i$ for $\Pi$, provided that\newline
(a) for all $c \in C_i$, $\theta (c) \cdot \Pi (\xi (c))  = \void $ 
and\newline
(b) for all edges $(c,{\sf w }, c') $ of $H_i$,
${\sf w}$ stays within $\Omega $ from $\theta(c)$,
and 
$\theta (c) \cdot \Pi ({\sf w}) = \theta (c' ) .$
\item[(B2)] 
Given a set of non-negative integers $\{h_i: 1\leq i \leq \Nht\}$,
let $\theta_{i,j}$ be
a handle of type $H_i$ for $\Pi$, for each $i=1,\ldots,\Nht,j=1,\ldots, h_i$.
Then, where $\hat{C}:= \bigcup_{i\in \{1,\ldots,\Nht\}} (C_i \times \{1,\ldots,h_i\})$,

define 
$\hat{\theta}:\hat{C} \rightarrow \{ (\omega , x) \mid \omega\cdot\Pi (x) = \void \}$ 
by
\[\hat{\theta}(c,j)=(\theta_{i,j}(c),\xi(c))\mbox{ when }c \in C_i.\]
We say that $B=(\Pi,\hat{\theta})$ is a {\bf brick} provided that the map
$\hat{\theta}$ is a bijection.
We call the elements of $\hat{C}$ {\bf cement points}.
\end{itemize}
\item[(B3)]
We extend $\Pi$ to a map 
$P : X \to 
\PM(\Omega , \Omega\disj \hat{C} )$ via
\[ \omega \cdot P(x):=\left \{ \begin{array}{ll} 
\omega \cdot \Pi(x) & \mbox{if } \omega\cdot \Pi(x)\neq\void\\
\hat{\theta}^{-1}(\omega,x) &\mbox{otherwise.}\end{array}\right.\]
The brick $(\Pi,\hat{\theta})$ is completely determined by the map $P$,
and so we may also call the map $P$ a brick, and write $P=(\Pi,\hat{\theta})$.
\end{defn} 

\begin{remark}
\label{rem:brickdefn}
(a) 
One may visualise a brick as a table with rows indexed by the elements of 
$\Omega $ and columns indexed by the elements of $X$. 
The position $(\omega,x) $ of the table is filled by $\omega \cdot P(x)$. 
\\
(b) If ${\sf w} = x_1\cdots x_s $ stays within $\Omega $ from the point 
$\omega $, then we may evaluate 
$$\omega P({\sf w} ) = \omega P(x_1) P(x_2) \cdots P(x_s) .$$ 
\,\\
(c) We note that the set 
\[ \{ \omega \cdot P(x): \omega \in \Omega, x \in X \} \]
can be expressed as the disjoint union of $\Omega$ and $\hat{C}$ 
(the domain of the bijection $\hat{\theta}$), and that the  
codomain of $\hat{\theta}$ can be expressed as 
$\{ (\omega,x):\omega \in \Omega, x \in X, \omega \cdot P(x) \not \in \Omega \}$
\end{remark}

The following is essentially a restatement of property (B1) of Definition~\ref{defn:brick}, and will be useful later.

\begin{lemma}
\label{lem:brick}
Suppose that $P=(\Pi,\hat{\theta})$ is a brick, as defined above,
that $\omega \in \Omega$ is within the image of a handle $\theta_{i,j}$ of type $H_i$ 
for $\Pi$, that is $\omega=\theta_{i,j}(c)$ (equivalently
$(\omega,\xi(c))=\hat{\theta}(c,j)$).
Suppose also
that $(c,{\sf w},c')$ is an edge of $H_i$, and 
$\omega' = \omega \cdot P({\sf w})$.
Then $\omega'$ is in the image of the same handle $\theta_{i,j}$,
for $\Pi$, with $\omega'=\theta_{i,j}(c')$,
and hence
$(\omega',\xi(c'))=\hat{\theta}(c',j)$.
\end{lemma}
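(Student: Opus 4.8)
The plan is to unwind the definitions; no genuinely hard step is involved. Write ${\sf w}=x_1\cdots x_s$ and set $\omega=\theta_{i,j}(c)$, which by the definition of $\hat\theta$ in (B2) is equivalent to $(\omega,\xi(c))=\hat\theta(c,j)$ (comparing first coordinates, the second coordinate $\xi(c)$ matching automatically). The first step is to apply property (B1)(b) of Definition~\ref{defn:brick} to the handle $\theta_{i,j}$: since $(c,{\sf w},c')$ is an edge of the handle type $H_i$, we obtain that ${\sf w}$ stays within $\Omega$ from $\omega$ and that $\omega\cdot\Pi({\sf w})=\theta_{i,j}(c')$.

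The second step is to check that $\omega\cdot P({\sf w})=\omega\cdot\Pi({\sf w})$. By (B3), whenever $\eta\in\Omega$ satisfies $\eta\cdot\Pi(x)\neq\void$ we have $\eta\cdot P(x)=\eta\cdot\Pi(x)$. Since ${\sf w}$ stays within $\Omega$ from $\omega$, every partial image $\omega\cdot\Pi(x_1\cdots x_k)$ with $k\leq s$ lies in $\Omega$, in particular is $\neq\void$; so a straightforward induction on $k$ gives $\omega\cdot P(x_1\cdots x_k)=\omega\cdot\Pi(x_1\cdots x_k)$ for all $k\leq s$. Taking $k=s$ yields
\[\omega'=\omega\cdot P({\sf w})=\omega\cdot\Pi({\sf w})=\theta_{i,j}(c'),\]
so in particular $\omega'$ lies in the image of $\theta_{i,j}$.

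The final step is to read off $\hat\theta$: since $(c,{\sf w},c')$ is an edge of $H_i$ we have $c'\in C_i$, so the definition in (B2) gives $\hat\theta(c',j)=(\theta_{i,j}(c'),\xi(c'))=(\omega',\xi(c'))$, which is precisely the asserted identity. The only point requiring any care is the second step, namely keeping the distinction between the partial permutation $\Pi$ of $\Omega$ and its extension $P:X\to\PM(\Omega,\Omega\disj\hat C)$, and verifying that the two agree at every point visited along a word that stays within $\Omega$; everything else is immediate from the definitions.
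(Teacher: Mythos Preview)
Your proof is correct and matches the paper's treatment: the paper does not give a separate proof but simply remarks that the lemma ``is essentially a restatement of property (B1)'' of Definition~\ref{defn:brick}, with the identification of $P({\sf w})$ and $\Pi({\sf w})$ along a word staying in $\Omega$ already noted in Remark~\ref{rem:brickdefn}(b). Your write-up makes that implicit step explicit, which is fine.
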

In the following, where
we shall often need to refer to more than one brick, 
in order to specify a particular brick $B$,
we shall denote by $\Omega_B$ and $\hat{C}_B$ the sets $\Omega$ and $\hat{C}$ above,
by $(h_{B,i})$ the sequence of integers $(h_i)$, 
by $\theta_{B,i,j}$, $\hat{\theta}_B$ the 
handles $\theta_{i,j}$ and the bijection $\hat{\theta}$, and by $\Pi_B,P_B$ the partial permutation
representations $\Pi,P$.

\begin{remark}\label{shape}
We call the pair $(|\Omega_B|,(h_{B,i}:i=1,\ldots,\Nht))$ the {\bf shape} of the brick $B$;
it is knowledge of the shapes of bricks associated with particular
jump data that allows us to decide the
possible degrees of mosaics that we can construct out of them.
But properties such as primitivity or multiple transitivity 
of the action on the mosaic
may depend on further properties of the bricks. 
\end{remark}

\subsection{The  brick finder algorithm}
\label{sec:brickfinder}

\paragraph*{\sc Input:} Jump data $\cJ =(C,J,S) $,  a presentation $G=\langle X \mid R \rangle $ 
such that $X = X^{-1}$ is explicitly closed under inversion, 
a bound $\NN$ on the number of points in the partial permutation representation, 
and an initial $c\in C$. 
\paragraph*{\sc Output:} All bricks $P : X \to \PM(\Omega, \Omega \cup \hat{C}) $ compatible with the given 
jump data $\cJ$ such that $1\cdot P (\xi (c) ) = (c,1)$ and $|\Omega | \leq \NN $. 
\paragraph*{\sc Preprocessing:} We close the relators, jumps and stays under inversion and the
relators under rotation. Several sanity checks are done at this point. 

The algorithm will backtrack on the {\bf partial coset table} $\PCT$ for $P$,
defined to be the table whose rows are indexed by
elements $\{ 1,\ldots , N \}$ with $N\leq \NN$,
columns are indexed by the generators $x \in X$, 
where the entry in position $(\omega, x) $ of the coset table is given by
\[ \PCT(\omega,x) := \left \{ \begin{array}{ll} \omega' \in \Omega,& \mbox{or}\\
                   (c,j) \in C \times \N & \mbox{with } \xi(c)=x, \mbox{ or}\\
                    0 & \mbox{if not yet defined.}\\
\end{array} \right .
\]
Initially $N=1$ and the table is empty.
We shall prove below that 
if the partial coset table completes with $N$ rows, then 
$\PCT$ defines a brick $P$,
on $\Omega = \{ 1,\ldots,N\}$, 
with \[\omega \cdot P(x) := \PCT(\omega,x).\]
{\bf Remark:} 
The algorithm is a depth first backtrack search on the partial coset table, where at all 
times the rows stand for distinct points in $\Omega $. 
There is therefore no coincidence procedure in this algorithm. 
\\
Also we ensure at all times that if $\PCT(\omega,x) = \omega ' \in \Omega $ then $\PCT(\omega',x^{-1}) = \omega $.
\paragraph*{\sc Algorithm (summary):} 
Each partial coset table is first checked against the relators and the stays. 
If any contradiction is found the partial coset table is rejected. 
\\ 
Otherwise we `disjoin', that is,
a position $(\omega , x)$ is selected where $PCT(\omega , x) = 0$ 
and all possible values
(from $\Omega $ or $C \times \N$, fitting the constraints detailed below) 
are tried in turn and passed to the next level of the
backtrack. 
\paragraph*{\sc Some details of the algorithm:} 
\begin{itemize}
\item[(a)] The checking of a relator resembles that used in Todd-Coxeter \cite{ToddCoxeter}.
From each $\omega \in \Omega$, each $r \in R$ we trace out the successive
images of $\omega$ under the successive generators of $r$
until either we reach an undefined entry or a cement point
 before the end of $r$, or we return to
$\omega$ at the end of $r$.
 The point is then similarly traced backwards using the inverses of the
generators and if these two processes meet we check that it is the same in both directions.  
\item[(b)] For each stay $(c,{\sf w},c')$, and for
each $\omega \in \Omega$, $x \in X$ with $\PCT(\omega,x)=(c,j)$ for some $j$,
we trace out the successive
images of $\omega$ under the successive generators of ${\sf w}$
until either we reach an undefined entry before the end of ${\sf w}$ or we 
reach the end of ${\sf w}$, and check that all those entries that are defined
are in $\Omega$, and that the image under ${\sf w}$, if defined, is
an element $\omega' \in \Omega$ for which $\PCT(\omega,\xi(c'))=(c',j)$.
Under certain circumstances it would be possible to trace a stay backwards also.
\item[(c)]
For correctness it is sufficient to disjoin next at any position which ensures termination. 
For efficiency it is preferable to select a position, if possible, where the number of
continuations is minimized. If either of the above checks nearly completes that is 
a good position to disjoin. 
\item[(d)] 
The 
 possible choices for the entry in the selected position $(\omega ,x)$ are

$\bullet $
 any 
point $\omega' \in \Omega $ that is already in the partial coset table
for which $\PCT(\omega ' , x^{-1} ) = 0$. 
In this case we set both 
$\PCT(\omega, x) = \omega'$ and $\PCT(\omega ' , x^{-1} ) = \omega$. 

$\bullet $
a new point, $\omega'=N+1$, which we now add to $\Omega$ (so long as $N+1\leq \NN$). 
In this case too we set both 
$\PCT(\omega, x) = \omega'$ and $\PCT(\omega ' , x^{-1} ) = \omega$. 

$\bullet $
a cement point $(c,j)$ with $\xi(c) = x$, where $j$ already occurs in the 
table but $(c,j)$ does not 

$\bullet $
a new cement point $(c,j)$ with $\xi(c) = x$, where $j$ does not occur in 
the table but (assuming $j>1$) $j-1$ does. 
\end{itemize} 

\begin{remark}
(a) One of us has implemented this algorithm as a standalone c-program 
and it works quite well finding
thousands of bricks. We group them according their shape (see Remark \ref{shape}) and the number of fixed points of the generators and only output one 
example for each set of invariants. 
\\
(b) 
The brick finder algorithm can modified for use as a low index procedure, 
just by not giving an initial cement point. 
Then the program finds all transitive permutation representations
on less than $\NN $ points. \\
(c) 
It is good to know whether during processing the maximum point test 
was actually used. 
If not, it is clearly pointless to rerun the program with an increased bound $\NN $.
\end{remark}

\begin{theorem}\label{brickfinderTheorem}
If the partial coset table completes with $N$ rows, then 
$\PCT$ defines a brick $P:X\rightarrow \PM(\Omega,\Omega \cup \hat{C} ) $,
where $\Omega = \{ 1,\ldots,N\}$, 
and \[\omega \cdot P(x) := \PCT(\omega,x).\]
\end{theorem}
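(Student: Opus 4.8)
My plan is to read the brick $(\Pi,\hat{\theta})$ directly off the completed table $\PCT$ and then verify the clauses of Definition~\ref{defn:brick} in turn, leaving the one genuinely delicate point for last. Set $\Omega=\{1,\dots,N\}$ and, for $x\in X$, let $\omega\cdot\Pi(x)$ be $\PCT(\omega,x)$ when that entry lies in $\Omega$ and $\void$ otherwise. The invariant maintained throughout the search --- that $\PCT(\omega,x)=\omega'$ forces $\PCT(\omega',x^{-1})=\omega$ --- shows that each $\Pi(x)$ lies in $\PP(\Omega)$, and, together with the fact that the cement-point options in item (d) never enter a pair $(c,j)$ twice, that $P(x)\colon\omega\mapsto\PCT(\omega,x)$ is injective into $\Omega\disj\hat{C}$. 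Transitivity of $\Pi$ is immediate, since a new row is only ever created as a value $\PCT(\omega,x)$ of an existing row, so an induction on the order of creation joins every row to $1$ by a word staying within $\Omega$, and reversing such words via the invariant connects any two rows. For the handle data, group the cement points occurring in $\PCT$ by the handle type $H_i$ of their first coordinate and relabel second coordinates so that those for $C_i$ run over $\{1,\dots,h_i\}$, obtaining $\hat{C}=\bigcup_i(C_i\times\{1,\dots,h_i\})$; the stay checks (item (b)) force the whole of a handle class to be entered once any of its cement points is, so $\theta_{i,j}(c):=$ the unique row $\omega$ with $\PCT(\omega,\xi(c))=(c,j)$ is well defined, clause (B1)(a) holds by construction, and (B1)(b) is exactly what the stay checks verify. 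Setting $\hat{\theta}(c,j):=(\theta_{i,j}(c),\xi(c))$, clause (B2) follows --- $\hat{\theta}$ is injective because distinct cement points occupy distinct cells, and surjective onto $\{(\omega,x):\omega\cdot\Pi(x)=\void\}$ because a completed table has no undefined cells --- and (B3), hence Remark~\ref{rem:brickdefn}, holds by the definition of $\Pi$ from $\PCT$.

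The one substantial point is that $\Pi$ is a partial permutation representation of $G$: whenever words ${\sf u}$ and ${\sf v}$ both stay within $\Omega$ from $\omega$ and represent the same element of $G$, then $\omega\cdot\Pi({\sf u})=\omega\cdot\Pi({\sf v})$. For a completed table, the relator check (item (a)) yields exactly this: for all $\omega\in\Omega$ and $r\in R$, if $r$ stays within $\Omega$ from $\omega$ then $\omega\cdot\Pi(r)=\omega$. I would deduce the representation property by showing that $\PCT$, regarded as a partial coset table with cement cells read as undefined, embeds --- without identifying any two rows of $\Omega$ --- into a complete (possibly infinite) coset table $\overline{T}$ of a subgroup $U\le G$: adjoin a fresh coset at each cement cell and run coset enumeration for $G$ from there to completion. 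Granting this, $\overline{T}$ defines a genuine permutation representation $\phi\colon G\to\Sym(U\backslash G)$ such that $\omega\cdot\Pi({\sf w})=\omega\cdot\phi({\sf w})$ for every word ${\sf w}$ staying within $\Omega$ from $\omega$; since $\phi$ factors through $G$ we get $\omega\cdot\Pi({\sf u})=\omega\cdot\phi({\sf u})=\omega\cdot\phi({\sf v})=\omega\cdot\Pi({\sf v})$.

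The heart of the proof, and what I expect to be the main obstacle, is the no-collapse claim just used. A direct rewriting argument --- passing from ${\sf u}$ to ${\sf v}$ by free reductions and relator insertions while tracking the endpoint in $\PCT$ --- fails because a relator traced from an interior point may run into a cement cell before closing up (a brick need not be extendable to a permutation representation on $\Omega$ alone), and the same obstruction reappears in the completion as a possible premature coincidence. This is precisely the phenomenon the compatibility hypotheses exist to control: by compatibility of $\cJ$ with $R$, a cyclic shift of a relator meets a cement point only at one of the marked positions $\pos(\xi(c),r)$, and thereafter factors through stays $(\overline{c_j},{\sf w}_j,c_{j+1})\in S$, whose realisation inside a brick is governed by the consistency condition on $S$ together with the stay checks (cf.\ Lemma~\ref{lem:brick}). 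I would use this factorisation to argue that any relator trace which would force a coincidence among rows of $\Omega$ can be rerouted to stay within $\Omega$ throughout, contradicting the relator check, so that $\overline{T}$ does exist and is consistent; supplying that rerouting is the crux, after which the rest is bookkeeping.
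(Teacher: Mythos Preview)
Your treatment of transitivity, the handle data, and the bijection $\hat\theta$ matches the paper's proof closely; the paper is equally brief on those points and simply says the relator checks give the partial permutation representation, construction gives transitivity, and the stay checks together with the cement-point selection rules give the handles and the bijection $\hat\theta$.

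Where you diverge is in the ``one substantial point''. The paper does \emph{not} attempt any embedding into a complete coset table: its entire argument for $\Pi$ being a partial permutation representation of $G$ is the single sentence ``The checks on relations, together with the fact that new images $\omega\cdot P(x)$ within $\Omega$ are defined in pairs, ensure that the map $\Pi$ \ldots\ yields a partial permutation representation of $G$.'' So your route is genuinely different, and considerably more ambitious.

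That ambition creates a real gap in your proposal. Your no-collapse argument explicitly invokes the compatibility of $\cJ$ with $R$ (the factorisation $r'=x_{i_1}{\sf w}_1\cdots x_{i_s}{\sf w}_s$ and the associated stays), but compatibility is \emph{not} a hypothesis of this theorem; it is only assumed later, at the start of Section~\ref{sec:mosaic}, for the mosaic construction. The brick finder takes arbitrary jump data as input, so you are appealing to a hypothesis you do not have. Even granting compatibility, you leave the actual ``rerouting'' step as a sketch (``supplying that rerouting is the crux''), so the delicate part is not established either way.

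It is worth observing what the paper actually \emph{uses} later. In the proof of Theorem~\ref{main} the only consequence drawn from ``$\Pi_{\beta(\lambda)}$ is a partial permutation representation'' is the weak statement: if $r\in R$ stays within $\Omega_{\beta(\lambda)}$ from $\omega$, then $\omega\cdot P_{\beta(\lambda)}(r)=\omega$. That is exactly what the relator check of item~(a) verifies on a completed table --- the forward scan runs the full length of $r$ and closes. So the route taken by the paper (assert that the checks suffice) is adequate for everything downstream, while the stronger ``depends only on the image in $G$'' reading you are trying to establish is both harder and, for the paper's purposes, unnecessary. If you want to close the gap cleanly, the simplest fix is to prove only this weak closure property directly from item~(a), and drop the embedding/no-collapse machinery altogether.
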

\begin{proof}
If the algorithm terminates,
on termination the $\PCT$ contains no null entries, and any entry
is either from $\Omega = \{1,\ldots,N\}$ of of the form $(c,j)$, with
$c \in C, j \in \N$.

The checks on relations, together with the fact that new images
$\omega \cdot P(x)$ within $\Omega$ are defined in pairs,
 ensure that the map $\Pi$ derived from $P$
by defining 
\[ \omega \cdot \Pi(x) = \left \{ \begin{array}{ll} \omega \cdot P(x) & \mbox{if that image is in } \omega \\
\void & \mbox{otherwise}  \end{array} \right.
\]
yields a partial permutation representation of $G$.
It is transitive by construction.

The selection of cement point entries ensures that no cement point
$(c,j)$  occurs twice,
and the check on stays ensures that when $c,c'$ are in the same handle type, then $(c,j)$ is an entry
precisely when $(c',j)$ is an entry. Hence we identify from the $\PCT$ integers
$h_i$, a set $\hat{C}$, and a bijection $\hat{\theta}$ from $\hat{C}$ to
$\{ (\omega,x): \PCT(\omega,x)=0\}$, as in B2. It is straightforward to
recover the handles $\theta_{ij}$ from $\hat{\theta}$.
\end{proof}

\subsection{The construction of the mosaic}
\label{sec:mosaic}

Now $G$ and jump data $\cJ$ for $X$ are specified, and we assume that 
$\cJ$ is compatible with $R$.
We suppose also that we have a finite
 set of bricks ${\cB}$ giving rise to mappings
$$P_B : X \to \PM (\Omega _B , \Omega _B \disj \hat{C}_B ) )  \mbox{ for all } B\in {\cB}.$$
These are the bricks that we are allowed to use in our mosaic. 
Usually we will use several copies of the same brick. 
To distinguish formally between the copies we choose an index set 
$\II$ together with a 
function
$\beta : \II \to  {\cB} $, so that $\beta (\ii ) = B$ means that 
we will use a copy of brick $B\in {\cB}$ in position $\ii $.

The aim of this section is to construct a permutation representation 
$\phi$ of 
$G$ on the set 
$$\cM := \bigcup _{\ii \in \II } \Omega _{\ii },\quad\mbox{where}\quad 
\Omega _{\ii } := \{ (\omega ,\ii ) 
\mid \ii \in \II , \omega \in \Omega _{\beta (\ii )} \} .$$ 
%

For each $1\leq i \leq \Nht$  we define 
$$D_i :=\{ (\ii, j ) \in \II \times \N  \mid 0<j\leq h_{\beta(\ii),i}\}; $$
the set $D_i$ indexes
the set of all handles of type $H_i$ within the mosaic. 
For $c\in C_i$ we define $D(c) := D_i$.

\begin{defn}
\label{defn:constructiondata}
We can define {\bf construction data} to be 
 a set 
$${\cZ}(C) = \{ \zeta (c) : D(c) \to D(\overline{c})  \mid  c\in C \} $$
of bijections, with 
$\zeta (\overline{c}) = \zeta (c)^{-1} $,
such that
the $\zeta (c)$ 
satisfy the groupoid relators ${\frak r}\in {\frak R}$, that is,
provided that the construction instruction  \[\zeta: \cj(c) \mapsto \zeta(c)\]
induces a groupoid homomorphism
from ${\cG}(\cJ,{\frak R})$ to ${\cZ}(C)$.
We call $\zeta$ the {\bf construction instruction}.
\end{defn}

We note that the map $\zeta$ associated with construction data
defines an action 
of the jump groupoid ${\cG}(\cJ,{\frak R})$ on the set
 \[\cD_\beta := \bigcup _{i =1}^\Nht D_i,\]
of all handles occurring in bricks in our mosaic. 
We use that action to connect our bricks into a mosaic.

\begin{defn}\label{mosaic}
For given construction instruction $\zeta$, we define the associated
{\bf mosaic} to be the pair $(\cM,\phi)$,
where
$${\cM}:= \bigcup _{\ii \in \II} \Omega_\ii \mbox{ with } \Omega_\ii := \{ (\omega , \ii) : \omega \in \Omega _{\beta (\ii)} \} ,$$
and
$\phi : X \to \Sym ({\cM} ) $ is defined by 
\begin{eqnarray*}
(\omega , \ii) \cdot \phi (x) &:=& \left\{ \begin{array}{ll} 
(\omega \cdot P_{\beta (\ii)} (x) ,\ii) & \mbox{ if } \omega \cdot P_{\beta (\ii)}(x) \in \Omega_{\beta(\lambda)} \\
(\omega ', \ii' ) & \mbox{ if }\omega \cdot P_{\beta (\ii)} (x) =:(c,j) \in \hat{C}_{\beta(\ii)},
\end{array} \right. 
\end{eqnarray*}
where in the second case, we 
define $\omega',\ii'$ via
$$ (\ii',j') := (\ii,j)\cdot \zeta(c) \mbox{ and }
(\omega',x^{-1} )  := \hat{\theta}_{\beta(\ii')}(\overline{c},j') .$$
\end{defn}

\begin{theorem}\label{main}
$\phi $ defines a group homomorphism $G\to \Sym ({\cM} ) $.
\end{theorem}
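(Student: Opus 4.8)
The plan is to show that for every relator $r \in R$ and every starting point $(\omega,\ii) \in \cM$, the permutation $\phi(r) := \phi(x_{i_1})\phi(x_{i_2})\cdots$ fixes $(\omega,\ii)$; since $\phi$ sends each generator to an honest permutation of $\cM$ (the map $x \mapsto P_B(x)$ composed with the jump via $\zeta(c)$ is a bijection because $\hat\theta_B$ is a bijection and $\zeta(c)$ is a bijection, and inverses match up), this suffices. First I would set up the bookkeeping: tracing the word $r$ from $(\omega,\ii)$ produces a sequence of points, each step either staying inside the current brick (using $P_{\beta(\ii)}$) or, when a cement point $(c,j)$ is hit, jumping to a new brick via $(\ii',j') = (\ii,j)\cdot\zeta(c)$ and landing at $\hat\theta_{\beta(\ii')}(\overline c, j')$. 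The key observation, and the reason $\cJ$ must be compatible with $R$, is that the positions in $r$ where we hit cement are \emph{forced}: by compatibility, the relevant cyclic shift $r_k$ factors as $x_{i_1}{\sf w}_1 x_{i_2}{\sf w}_2\cdots x_{i_s}{\sf w}_s$ with the ${\sf w}_j$ being stay-words, and by the consistency condition on $S$ (together with the fact that, from a point in the image of a handle $\theta_{i,j}$, applying $\xi(c)$ gives $\void$), this factorisation is the unique way the trace can proceed. In particular, between consecutive cement hits the trace runs along a stay-edge of a handle type, so Lemma~\ref{lem:brick} applies at each such segment.

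The heart of the argument is then to track the pair (current point in $\cM$, current handle index) along the trace and show it returns to the start. Concretely: suppose at some stage we are at a point $(\omega,\ii)$ which is $\theta_{\beta(\ii),i,j}(c)$ for some $c \in C_i$, i.e.\ $(\omega,\xi(c)) = \hat\theta_{\beta(\ii)}(c,j)$. Reading $\xi(c)$ sends us, by Definition~\ref{mosaic}, to the brick $\ii'$ with $(\ii',j') = (\ii,j)\cdot\zeta(c)$, landing at the point $\omega'$ with $(\omega',\xi(c)^{-1}) = \hat\theta_{\beta(\ii')}(\overline c, j')$; that is, $\omega' = \theta_{\beta(\ii'),i,j'}(\overline c)$, so $\omega'$ lies in the image of the handle indexed by $(\ii',j')$. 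Then we read the stay-word ${\sf w}$ with $(\overline c, {\sf w}, c'') \in S$; by Lemma~\ref{lem:brick} this stays within $\Omega_{\beta(\ii')}$ and brings us to $\theta_{\beta(\ii'),i,j'}(c'')$, i.e.\ to the cement point $(c'',j')$ in brick $\ii'$, and we repeat. So the data we carry is precisely an element of $D(c) = D_i$, transported by $\zeta$ exactly as in the groupoid action on $\cD_\beta$: reading the generators $\xi(c_1),\ldots,\xi(c_s)$ of the compatible factorisation applies the composite $\zeta(c_1)\cdots\zeta(c_s)$ to the index $(\ii,j)$.

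Now I invoke the definition of construction data: the product ${\frak r}_k = \cj(c_1)\cdots\cj(c_s)$ lies in ${\frak R}$, hence is trivial in the jump groupoid $\cG(\cJ,{\frak R})$, so $\zeta({\frak r}_k) = \zeta(c_1)\cdots\zeta(c_s) = \id$ on $D_i$. Therefore after tracing all of $r_k$ the handle index returns to $(\ii,j)$; since source and target of ${\frak r}_k$ agree and we land back in the same handle of the same brick at the vertex $c_1 = c$ we started from, the point returns to $(\omega,\ii)$. Because this holds for the cyclic shift $r_k$ starting at \emph{any} position $k \in \pos(\xi(c),r)$ — and every point of a brick that is not a cement point lies strictly inside the partial representation, where $\Pi$ already respects $r$ by the definition of partial permutation representation — one concludes $\phi(r)$ is the identity on all of $\cM$. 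Doing this for every $r \in R$ (and we have already reduced to $R$ closed under rotation and inversion, so it is enough to check one cyclic shift class) gives the homomorphism.

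I expect the main obstacle to be the careful handling of the case distinction in the trace and making the "forced factorisation" rigorous: one must argue that a trace of $r_k$ from $(\omega,\ii)$ which at some point hits a cement point $(c_1,j)$ is \emph{compelled} to follow the $\cJ$-compatible factorisation of $r_{k'}$ anchored at $c_1$ (where $k'$ is the shifted position), using the consistency condition to rule out hitting cement "too early", and to confirm that the very first step indeed hits cement iff $\omega$ sits in a handle image — if $\omega$ is not in any handle image, then $\omega\cdot\Pi(\xi(c))$ would need to be defined and the whole trace stays inside one brick, where triviality of $\phi(r)$ is just the partial-permutation-representation property. Packaging all of this cleanly, rather than any single computation, is where the work lies.
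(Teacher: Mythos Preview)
Your proposal is correct and follows essentially the same route as the paper's proof: first verify that $\phi(x)\phi(x^{-1})$ is the identity (the paper does this explicitly, you sketch it parenthetically), then for each relator either observe that the trace stays entirely within one brick (where the partial permutation representation property suffices), or rotate to the first cement hit and use the compatible factorisation $r' = x_{i_1}{\sf w}_1\cdots x_{i_s}{\sf w}_s$, applying Lemma~\ref{lem:brick} across each stay segment and concluding via $\zeta(c_1)\cdots\zeta(c_s)=1$ that the handle index (and hence the point) returns to its start. Your discussion of the ``forced factorisation'' via the consistency condition makes explicit a point the paper leaves implicit, but otherwise the structure and the key ingredients are the same.
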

\bew
We first note that for all $x\in X$ and $(\omega ,\ii ) \in {\cM}$ 
$$(\omega ,\ii ) \cdot \phi (x) \cdot \phi (x^{-1}) = (\omega ,\ii ). $$
This is clear if $\omega \cdot P_{\beta(\ii)} (x) \in \Omega _{\beta(\ii )} $. 
Otherwise let
$(c,j) := \omega \cdot P_{\beta(\ii)} (x) = (\hat{\theta}_{\beta(\ii)})^{-1}(\omega,x)$. 
and let
$(\omega ' , \ii ') := (\omega ,\ii) \cdot \phi(x)$.
Then
$(\ii ,j) \cdot \zeta (c)  =
(\ii ', j' )  
\in D(\overline{c}) $ and 
so 
\[ (\ii,j)= (\ii ', j' )\cdot \zeta({c})^{-1}=  (\ii ', j' )\cdot\zeta(\bar{c}),\]  
while also $\overline{\overline{c}} =c$.
Further
$(\omega',x^{-1} ) = \hat{\theta}_{\beta(\ii')}(\overline{c},j')$,
and so by definition (of the codomain of the bijection $\hat{\theta}_{\beta(\ii')}$ (see Remark~\ref{rem:brickdefn}(c)),
$\omega' \cdot P_{\beta(\ii')}(x^{-1})\not\in \Omega_{\beta(\ii')} $
Hence it follows from the equation $\hat{\theta}_{\beta(\ii)}(c,j) = (\omega,x)$
that
$$(\omega ' , \ii ')  \cdot \phi(x^{-1}) =  (\omega ,\ii ). $$

It remains to show that $\phi $ satisfies the relators in $R$.
Given a relator
$r=(x_1 \cdots  x_t)^m \in R$ 
and some point $(\omega , \ii) \in {\cM}$ we need to show that $(\omega,\ii) \cdot \phi(r) = (\omega,\ii)$.
Since $\Pi_{\beta(\ii)} $ defines a partial 
permutation representation we have $\omega \cdot P_{\beta (\ii)}(r) = \omega $ if the 
successive application of the maps $P_{\beta (\ii)} (x_i)$ to $\omega $ keeps its images within the set $\Omega _{\beta (\ii)}$.
If not, then there is a first position $i_1 \in \{ 1,\ldots , t\} $ and some $\omega '$ 
in the sequence of successive images 
 such that 
$\omega ' \cdot P_{\beta(\ii)} (x_{i_1}) \in \hat{C}_{\beta(\ii)}$.
Suppose that
$\omega ' \cdot P_{\beta(\ii)} (x_{i_1}) = (c_1,j_1)$  for some $c_1,j_1$
In this case, $\omega'$ is within the image of a handle of
the brick $\beta(\ii)$ 
and hence $(\omega',x_{i_1})=\hat{\theta}_{\beta(\ii)}(c_1,j_1)$.
It is enough to 
show that for the rotated relator 
$r'=(x_{i_1}\ldots )^m$
we have $(\omega',\ii) \cdot \phi(r') = (\omega',\ii)$.
The compatibility of $\cJ$ with $R$ ensures that $r'$ factorises
as
$$r' = x_{i_1} {\sf w}_1 x_{i_2} {\sf w}_2\ldots  x_{i_s} {\sf w}_s$$
where,
for each $1\leq j \leq s $, 
we have $x_{i_j} = \xi(c_j)$ for some $c_j \in C$,
and $ (\overline{c_j} , {\sf w}_j, c_{j+1} ) \in S $
(addition $\bmod\,s$), 
and hence we see that $\cj(c_1)\cdots \cj(c_s)=1$ holds in $\cG$.
Put $\omega _1 := \omega ' $, $\ii_1 := \ii$.
For $n=1,\ldots , s $, put 
$$(\ii_{n+1},j_{n+1}) := (\ii_{n},j_{n}) \cdot \zeta (c_{n}) .$$
Then $(\omega _1,\ii_1) \cdot \phi (x_{i_1}) = (\omega _1' , \ii_2) $ where 
$(\omega _1',x_{i_1}^{-1}) =\hat{\theta}_{\beta (\ii_2)}(\overline{c_1},j_2)$. 
Lemma~\ref{lem:brick} ensures that
$(\omega _1' , \ii_2) \cdot \phi ({\sf w}_1) = (\omega _2 , \ii_2 )
\in \Omega_{\beta(\ii_2)}$,
and also that $(\omega_2,x_{i_2}) = \hat{\theta}_{\beta(\ii_2)}(c_2,j_2)$.
We continue like this (repeatedly applying Lemma~\ref{lem:brick})
and hence compute 
$(\omega',\lambda)\cdot \phi(r')$
as
$$(\omega _s',\ii_{s+1} ) \cdot \phi({\sf w}_s) = (\omega _{s+1},\ii_{s+1} )  
\in \Omega_{\beta(\ii_{s+1})},$$
and see that 
$(\omega_{s+1},x_{i_1}) = \hat{\theta}_{\beta(\ii_{s+1})}(c_1,j_{s+1})$. 
 
Since $\zeta (c_1) \cdots \zeta (c_s) = 1$ we get $(\ii_{s+1},j_{s+1})= (\ii_1,j_1)$ and so
\\
$(\omega_{s+1},x_{i_{s+1}}) = \hat{\theta}_{\beta(\ii_1)}(c_1,j_1)=(\omega_1,x_{i_1})$.
\eb

\begin{remark} \label{transitivity} 
As all the constituent bricks are transitive,
the action defined by $\phi$ on $\cM$ is
transitive,
provided that the graph on $\Lambda$ for which 
$$\lambda \sim \lambda' \iff \exists j,j',c, (\lambda,j)\cdot \zeta(c) = (\lambda',j')  $$
is connected.
\end{remark}

There might be an algorithm to find all mosaics made from bricks of given shape. 
However in practice the groupoid relators are not too complicated 
and families of mosaics can be constructed by hand, as we will illustrate in the next section.

\section{Some examples} 
\label{sec:examples}

\subsection{Example 1}
Let $$G := \langle s,t \mid s^3, t^2, (st)^7 \rangle $$
So in our notation $X=\{ s,t,r \} $ with $s^{-1} = r$, $t^{-1} = t$, 
$$R= \{ s^3 , r^3, (st)^7, (ts)^7, (rt)^7, (tr)^7 \} $$
We choose $C=\{ c_1,c_2 = \overline{c_1} \} $, with $\xi(c_1)=t=\xi(c_2)$.
$$\begin{array}{l}
J=\cj (C):= \{ (c_1,t,c_2) , (c_2,t,c_1 ) \} , 
\\  S = \{ (c_1, ststs, c_1), (c_2,stststs, c_2),
(c_1, rtrtr, c_1), (c_2,rtrtrtr, c_2) \} \end{array} $$
Then the equivalence classes in $C$ defined by $\sim_S$ are $[c_1] = \{ c_1 \} $ and $[c_2] = \{ c_2 \}$ and
the inv-tab groupoid ${\cF}(\cJ) $ is generated by $\ _1 \cj(c_1) _2 $, $\ _2 \cj(c_2) _1 = (_1\cj(c_1)_2)^{-1} $.
We see that $t$ is the only cementable generator; so we examine factorisations
of $(ts)^7$ and $(tr)^7$ in order to verify that $\cJ$ is compatible with $R$
and find the relations ${\frak R}$ that define the jump groupoid $\cG(\cJ,{\frak R})$ from $\cF(\cJ)$. 
We see that $(ts)^7$ has a compatible factorisation as
$\xi(c_1)\,stststs\,\xi(c_2)ststs$
 (since $(\overline{c_1},stststs,c_2)$ , $(\overline{c_2},ststs,c_1) \in S$), and
$(tr)^7$ a compatible factorisation as
$\xi(c_1)\,rtrtrtr\,\xi(c_2)\,rtrtr$, giving
\[ {\frak R} = \{\cj(c_1)\cj(c_2)\} = \{ \cj(c_1)\cj(\overline{c_1}) \}.\]
So we have ${\cG}(\cJ,{\frak R}) = {\cF}(\cJ)$.

The brick finder algorithm was run with a limit of 110 points finding many thousands of bricks,
of which we shall use just two, one on 28 points and one on 57 points, each with one
handle of each type. 
Let $\cB = \{B_1,B_2\}$ be that set of two bricks, $B_1$ of shape $(28,(1,1))$ 
and $B_2$ of shape $(57,(1,1))$. 
Choose $\II=\{1,\ldots,M\}$ for any integer $M$ and any  surjection
$\beta:\II \rightarrow \cB$. The sets $D_1=D(c_1)$ and $D_2=D(c_2)$ are both
in correspondence with $\II$ and so can be identified with $\II$, and the 
map $\zeta$ defined by
\[ \zeta(c_1):\ii \mapsto \ii+1,\quad \zeta(c_2);\ii \mapsto \ii-1 \]
(addition defined $\bmod M$) is a valid construction instruction.

\begin{wrapfigure}{r}{0.4\textwidth}
\begin{center}
 \includegraphics[scale=0.5]{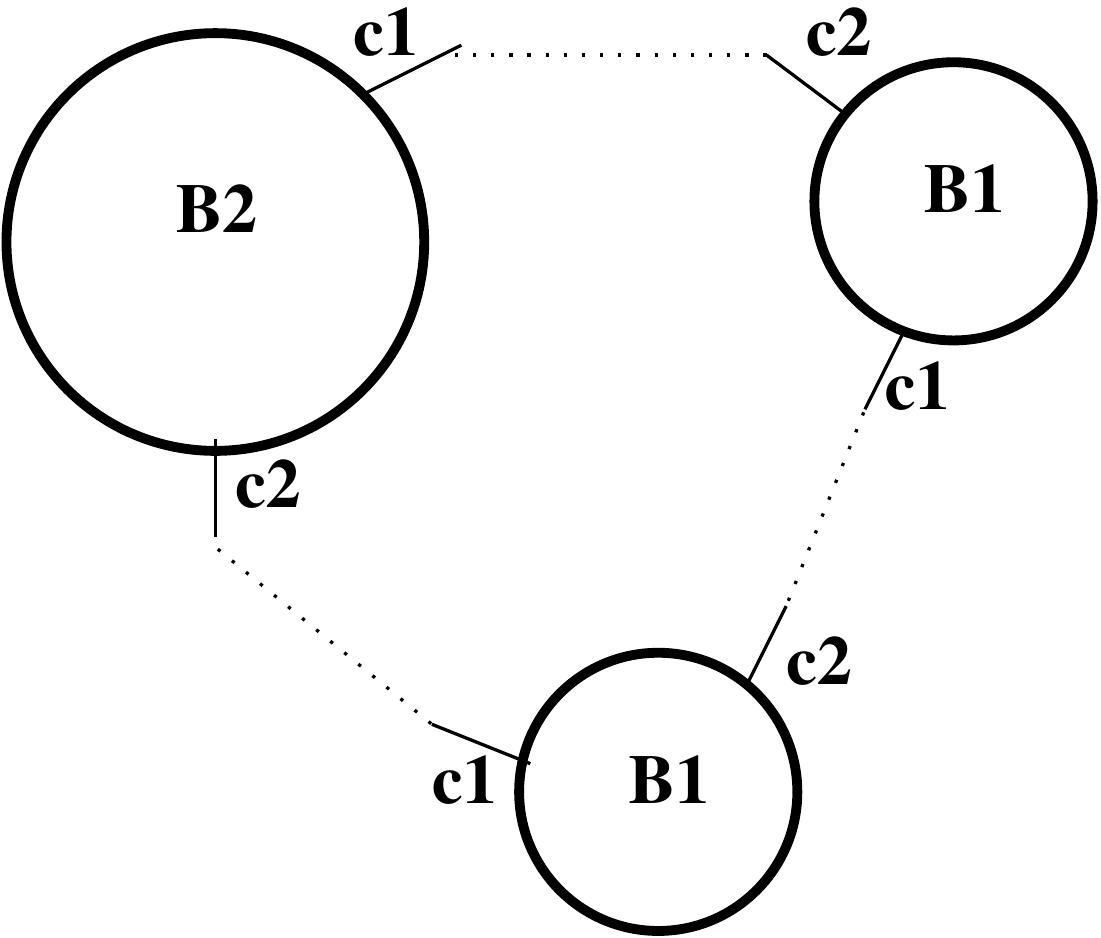}
\end{center} 
\end{wrapfigure}

Informally, this means we can fit the two bricks together in any combination by joining them up in a circle of $M$ bricks, for example  the picture to the right
yields a transitive permutation representation of $G$ on 113 points 
which, $113$ being prime,
 is obviously primitive, and since the group order is divisible by 
2,3, and 7, its image is clearly the alternating group.
The next theorem is an easy consequence of the fact that
we may join these two bricks in an arbitrary circle to obtain a 
transitive permutation representation of $G$.
\begin{theorem}
For any $n \in \N$ with $n\geq 28 \cdot 57 $ the 
group $G = \langle s,t \mid s^3, t^2, (st)^7 \rangle $ has a transitive permutation 
representation on $n$ points. 
\end{theorem}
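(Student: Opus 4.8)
The plan is to combine the mosaic machinery of Section~\ref{sec:mosaic} with the classical Frobenius (``Chicken McNugget'') estimate for the numerical semigroup generated by $28$ and $57$. First I would record the following consequence of what precedes: for any non-negative integers $a,b$ that are not both zero, put $M=a+b$, take the index set $\II=\{1,\ldots,M\}$, pick any surjection $\beta:\II\to\cB$ that assigns exactly $a$ of the indices to $B_1$ and the remaining $b$ to $B_2$, and use the cyclic construction instruction $\zeta(c_1):\ii\mapsto\ii+1$, $\zeta(c_2):\ii\mapsto\ii-1$ (addition mod $M$). By Theorem~\ref{main} this yields a homomorphism $\phi:G\to\Sym(\cM)$, and by Remark~\ref{transitivity} the resulting action on $\cM$ is transitive, because the graph on $\II$ induced by $\zeta$ is an $M$-cycle (a single looped vertex when $M=1$), hence connected. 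The degree of this mosaic is $\sum_{\ii\in\II}|\Omega_{\beta(\ii)}|=28a+57b$.

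With that in hand, the theorem reduces to the purely arithmetic claim that every integer $n\ge 28\cdot57=1596$ can be written as $28a+57b$ with $a,b$ non-negative integers; the requirement that $a,b$ are not both zero is automatic since $n>0$. I would prove this directly rather than invoke the general Sylvester--Frobenius theorem. Since $\gcd(28,57)=1$ and in fact $57\equiv 1\pmod{28}$, given $n\ge 1596$ let $b$ be the unique element of $\{0,1,\ldots,27\}$ with $b\equiv n\pmod{28}$, and set $a:=(n-57b)/28$. Then $28$ divides $n-57b$, since $57b\equiv b\equiv n\pmod{28}$; and $a$ is a (strictly positive) integer because $57b\le 27\cdot57=1539$, whence $n-57b\ge 1596-1539=57>0$. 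Feeding this pair $(a,b)$ into the construction of the previous paragraph finishes the proof.

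I do not anticipate any real obstacle: the substantive work is entirely carried by Theorem~\ref{main} and Remark~\ref{transitivity}, leaving only the elementary congruence computation above. The one point deserving a sentence of care is that the degenerate configurations (a circle of length $M=1$, or a circle all of whose bricks have the same type) are still legitimate mosaics. This holds because in this example the construction instruction $\zeta$ need only satisfy the single groupoid relator $\cj(c_1)\cj(c_2)=1$, which for the chosen $\zeta$ is the identity $\zeta(c_1)\zeta(c_1)^{-1}=\id$ and therefore holds for every $M\ge1$ and every choice of $\beta$.
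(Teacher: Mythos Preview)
Your proposal is correct and is exactly the argument the paper intends: it merely states the theorem as ``an easy consequence of the fact that we may join these two bricks in an arbitrary circle,'' and you have filled in precisely those details---Theorem~\ref{main} and Remark~\ref{transitivity} for the mosaic, plus the Frobenius/Chicken McNugget estimate for $\gcd(28,57)=1$. One cosmetic point: you write ``pick any surjection $\beta$'' but then (correctly) allow $b=0$; in that case $\beta$ is not onto $\cB=\{B_1,B_2\}$, so simply drop the word ``surjection'' (the general construction in Section~\ref{sec:mosaic} only requires a function $\beta:\II\to\cB$).
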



Using more bricks one can decrease the bound $28 \cdot 57 $ considerably, but this problem has
already been solved by \cite{Stothers3}, which states that the true bound is 168. 
We will come back to this for other triangle groups in Example 3.

\subsection{Example 2} 

Take $G$ as above and define 
\begin{eqnarray*} 
C &:=& \{c_1,c_2=\overline{c_1},c_3,c_4=\overline{c_3}\},\,\xi(c_1)=\xi(c_3)=s,\,\xi(c_2)=\xi(c_4)=r,\\
\cj(C)&:=&\{ (c_1,s,c_2), (c_2,r,c_1), 
 (c_3,s,c_4), (c_4,r,c_3)  \} , \\ 
S&:=& \{ (c_2,\emptyword,c_1),(c_1,\emptyword,c_2), (c_3,\emptyword,c_4), (c_4,\emptyword,c_3), 
(c_2,tstst,c_3) , (c_3,trtrt,c_2) ,\\
&&\quad (c_4,tststst,c_1), (c_1,trtrtrt,c_4) \},
\end{eqnarray*}
where $\emptyword$ denotes the empty word.
Now $r$ and $s$ are the cementable generators, and we need to
check for compatible factorisations of $r^3,s^3,(rt)^7$, and $(st)^7$.
We factorise $r^3$ as $\xi(c_2)^3$ (note that $(\overline{c_2},\epsilon,c_2) \in S$) and as $\xi(c_4)^3$ and
$s^3$ as $\xi(c_1)^3=\xi(c_3)^3$.
Then we factorise
$(rt)^7$ and $(st)^7$ as 
\begin{eqnarray*}
(rt)^7 &=& \xi(c_4)\,trtrt\,\xi(c_2)\,trtrtrt\quad \ ; \ (\overline{c_4},trtrt,c_2),(\overline{c_2},trtrtrt,c_4)\in S\\
(st)^7 &=& \xi(c_1)\,tstst\,\xi(c_3)\,tststst\quad\ ; \ (\overline{c_1},tstst,c_3),(\overline{c_3},tststst,c_1) \in S.\\
\mbox{Then }{\frak R} &=& \{ \cj(c_2)^3,\cj(c_4)^3,\cj(c_1)^3,\cj(c_3)^3,
\cj(c_4)\cj(c_2), \cj(c_3)\cj(c_1) \} \end{eqnarray*} 
Since all the pieces of cement are equivalent, the groupoid ${\cG}(\cJ,{\frak R})$ is 
a group: 
$${\cG}(\cJ,{\frak R}) = \langle \cj(c_1),\cj(c_2),\cj(c_3),\cj(c_4) \mid 
\cj(c_1)\cj(c_2), \cj(c_3)\cj(c_4), \cj(c_1)^3, \cj(c_3)^3, \cj(c_1) \cj(c_3) \rangle $$
So clearly ${\cG}(\cJ,{\frak R}) \cong C_3$, which means that any transitive mosaic
can have either one or three bricks. 

We find six brick shapes with $|\Omega | \leq 40$ points containing this handle 
exactly once: $|\Omega | = 14,21,28,29,35,36 $, any 3 of those can be joined 
in a circle to construct a permutation representation of $G$. 
Gluing the brick $B$ on 14 points 3 times in a circle we obtain  an imprimitive permutation
representation $\varphi $ on $3\cdot 14$ points.

\vspace*{-7cm}
$
\begin{array}{r|r|r|r} 
B & s & r & t \\ 
\hline
1 &  c_1& c_2&  2 \\
2 &  3 & 4 & 1 \\
3 &  4 & 2 & 5 \\
4 &  2 & 3 & 6 \\
5 &  7 & 8 & 3 \\
6 &  9 &10 & 4 \\
7 &  8 & 5 &11  \\
8 &  5 & 7 &10 \\
9 & 10 & 6 &12 \\
10&   6&  9&  8 \\
11&  c_3& c_4&  7 \\
12&  13& 14&  9 \\
13&  14& 12& c_5 \\
14&  12& 13& c_6 
\end{array} $
\hfill
 \includegraphics[width=8cm,height=11cm,trim=0 200 0 -400bp]{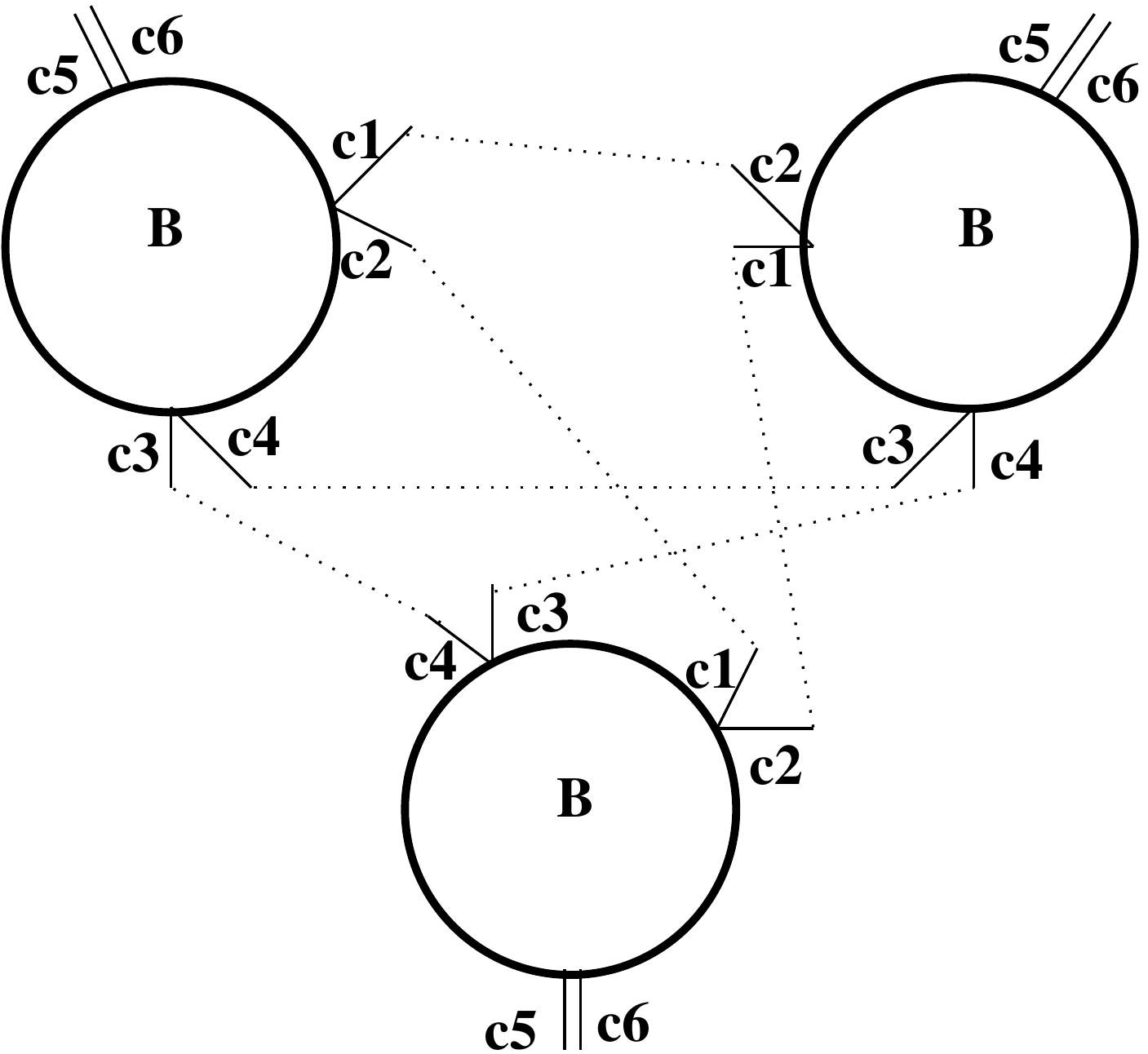}

\vspace*{0.5cm}

We now add 
more cement,  $c_5=\overline{c_5}$ and $c_6=\overline{c_6}$,
with $\xi(c_5)=\xi(c_6)=t$,
and put
$$J' := \cj(C) \cup \{ (c_5,t,c_5), (c_6,t,c_6) \} ,
S':= S \cup \{ (c_5,s,c_6), (c_6,r,c_5), (c_6,s(ts)^5,c_5), (c_5,r(tr)^5,c_6) \} .
$$
Now $(ts)^7$ factorises as $\xi(c_5)\,s\,\xi(c_6)\,s(ts)^5$ and
$(tr)^7$ factorises as $\xi(c_6)\,r\,\xi(c_5)\,r(tr)^5$, so
that the extra relators that define $\cG$ as a quotient of $\cF(\cJ)$ are
given by
$${\frak R}' := {\frak R} \cup \{ \cj(c_5)\cj(c_6)\}$$. 

Restricting to brick shapes on less that 40 points we find further bricks:
one brick $B$ of shape $(14,(1,1))$ (as displayed in the 
coset table above) 
bricks $A_i$  of shape $(i,(0,1)) $ 
for  $i= 7,14,15,21,22,28,35,36$ points 
and one brick of shape $(28,(0,2))$. 

\begin{wrapfigure}{r}{0.4\textwidth}
\begin{center}
 \includegraphics[scale=0.6]{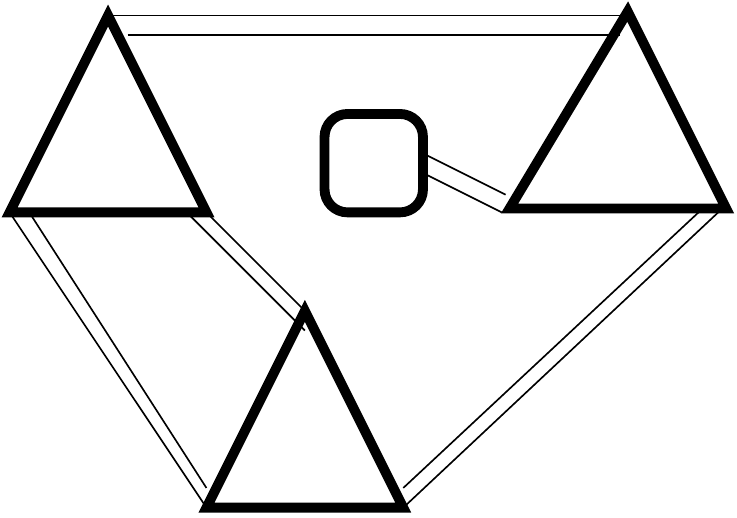}
\end{center} 
\end{wrapfigure}
The imprimitive permutation representation $\varphi $ 
on $3\cdot 14$ points from above can also be seen as one brick $\Delta$ of 
shape $(42,(0,3))$ (with cement points on the fixed points of $t$, 
one $c_5$ and $c_6$ in each of the 3 imprimitivity regions) as in the 
picture above. This brick $\Delta $ 
will be visualized as a triangle in the picture on the right. 
Taking 
one copy of the brick of shape $(7,(0,1))$ (the rounded square in the
picture) we construct a mosaic on
$9 \cdot 14 + 7 = 133 $ points using the new cement as indicated in the
picture on the right.
The resulting permutation group $\phi (G)$ is indeed isomorphic to the
alternating group of degree 133.

\begin{remark} 
Note that the action of 
the jump groupoid $\cG (\cJ' , {\frak R}') $ on the
handles in the mosaic is intransitive, there are no
jumps between  the two handle types $H_1 = \{ c_1,c_2,c_3,c_4 \}$ and $H_2 = \{ c_5,c_6 \}$.
Nevertheless the constructed mosaic is transitive, 
because all handles are connected by a sequence of jumps and 
bricks (see Remark \ref{transitivity}). 
\end{remark}

\subsection{Example 3: 
Some (2,3,n) triangle groups.} 

In \cite{Stothers3} the (2,3,n) triangle groups are considered.
So let 
$$G(2,3,n) := \langle s,t \mid s^3, t^2, (st)^n \rangle $$
Then Stothers defines 
$$M(n) := \min \{ N\in \N \mid G(2,3,n) \mbox{ has a subgroup of index } m \mbox{ for all }
t\geq N \}$$
and determines bounds on $M(n)$ (\cite[Theorem 1]{Stothers3}): 
$$\begin{array}{|c|c|c|c|c|c|c|c|c|c|c|} 
\hline 
n & 7 & 8 & 9 & 10 & 11 & 12 & 13 & 14 & 15 & 16 \\
\hline
M(n) & 168 & \leq 240 & \leq 180 &  10 & \leq 110 & \leq 240 & \leq 143 & \leq 154 & 
\leq 210 & \leq 128 \\
\hline 
\end{array}
$$

Using our technique we are able to compute the exact values of $M(n)$ for certain $n$:
\begin{theorem}
$M(8) = 24$, $M(9) = 35$, $M(12) = 12$, $M(14) = 14$, $M(15)= 15$, $M(16) = 12 $. 
\end{theorem}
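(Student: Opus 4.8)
The plan is to establish each equality $M(n)=m_n$ in two halves: a \emph{lower bound} $M(n)\ge m_n$, which is a finitely checkable statement about the non-existence of subgroups of certain indices, and an \emph{upper bound} $M(n)\le m_n$, which requires exhibiting, for every $m\ge m_n$, a transitive permutation representation of $G(2,3,n)$ on $m$ points. For the lower bounds, I would run the brick finder algorithm (Theorem~\ref{brickfinderTheorem}) in its low-index mode --- that is, without an initial cement point, so that it enumerates all transitive permutation representations up to the relevant bound (see Remark~(b) after the brick finder) --- and verify directly that no transitive action of $G(2,3,n)$ on $m$ points exists for the finitely many values $m<m_n$ that are not already ruled out by elementary divisibility constraints (the order of the image must be divisible by $2$, $3$ and $n$, so $m$ cannot be too small, and for instance only a few residues mod small primes are feasible). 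This is a finite computation for each $n$.

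For the upper bounds, the strategy mirrors Examples~1 and~2: for each $n$ I would choose suitable jump data $\cJ$ for $X=\{s,t,r\}$ compatible with $R=\{s^3,t^2,(st)^n\}$ (following the templates already worked out for $n=7$), compute the jump groupoid $\cG(\cJ,{\frak R})$, and run the brick finder up to a modest bound to produce a family of bricks whose shapes realise enough congruence classes. Concretely, if I can find bricks of sizes $a_1,\dots,a_k$ (each carrying exactly one copy of each handle type) together with a construction instruction $\zeta$ that glues any cyclic arrangement of them into a transitive mosaic --- exactly as in Example~1, where $\zeta(c_1):\ii\mapsto \ii+1$ made circles of arbitrary length work --- then Theorem~\ref{main} and Remark~\ref{transitivity} give transitive actions on every point count lying in the numerical semigroup generated by $a_1,\dots,a_k$. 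Choosing the $a_i$ so that the Frobenius number of that semigroup is exactly $m_n-1$ (and checking that every $m\ge m_n$ is actually attained, not merely that the semigroup is eventually everything) then yields $M(n)\le m_n$. In the small cases $M(12)=12$, $M(14)=14$, $M(15)=15$, $M(16)=12$ the claim is that a \emph{single} brick shape (or a very short list) already suffices, so the semigroup argument degenerates to: there is a brick on $m_n$ points and on $m_n+1$ points, hence circles give all larger sizes.

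The main obstacle is the upper bound, and within it the combinatorial matching between the available brick shapes and the target Frobenius number. The brick finder produces \emph{many} bricks, but I need a sub-collection, all sharing compatible handle structure so that one fixed $\zeta$ works for all of them, whose sizes generate precisely the right semigroup; there is no guarantee a priori that the smallest bricks line up this neatly, and for some $n$ (e.g.\ $n=9$, where $M(9)=35$ is not tiny) one may need to enlarge the jump data --- adding extra pieces of cement, as was done in the second half of Example~2 --- so that more gluing patterns, not just circles, become available, at which point transitivity must be re-verified via the connectivity criterion of Remark~\ref{transitivity}. A secondary, purely bookkeeping obstacle is confirming compatibility of each chosen $\cJ$ with $R$ (Definition~\ref{defn:brick}'s prerequisite): one must exhibit the explicit factorisations of $s^3$, $t^2$ and all cyclic shifts of $(st)^n$ into the prescribed $x_{i_j}{\sf w}_j$ form, which is routine but must be done correctly for each $n$ since the exponent $n$ changes the lengths of the stay-words ${\sf w}_j$.
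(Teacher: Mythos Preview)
Your two-halves plan (low-index for the lower bound, mosaics for the upper bound) matches the paper's, and the lower-bound half is fine. The gap is in the upper-bound mechanism.

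You propose to use only bricks ``each carrying exactly one copy of each handle type'' and to glue them in circles, so that the attainable degrees form the numerical semigroup generated by those brick sizes; you then want the Frobenius number to be $m_n-1$. But for the values of $n$ in the theorem this simply does not happen. Take $n=12$: with the natural jump data for even $n$ (a single self-inverse cement $c_1$ with $\xi(c_1)=t$ and stays $(c_1,(st)^{(n-2)/2}s,c_1)$, $(c_1,(rt)^{(n-2)/2}r,c_1)$), the jump groupoid is cyclic of order~2, and the small two-handle bricks have $|\Omega|\in\{12,15,18,21,22,\ldots\}$. Circles of these do \emph{not} realise $13,14,16,17,19,20,23$; the Frobenius-type bound you would obtain from this list is far above $11$. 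Your degenerate claim ``there is a brick on $m_n$ points and on $m_n+1$ points, hence circles give all larger sizes'' fails already here: there is no two-handle brick on $13$ points.

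What the paper actually does --- and what your proposal is missing --- is to use, in addition to the two-handle bricks $A_{12}$, bricks carrying a \emph{single} handle (sets $A_1$, $A_2$; for even $n$ these coincide). Two such single-handle bricks serve as end-caps on a chain of $A_{12}$-bricks, giving degrees of the form
\[
a+b+\sum_{j=1}^{s} c_j,\qquad a\in A_1,\ b\in A_2,\ s\ge 0,\ c_j\in A_{12},
\]
in addition to the pure circles $\sum_{j=1}^{s} c_j$. For $n=12$ one finds $A_1\supset\{6,7,9,10,12,13\}$, and the sums $a+b$ with $a,b\in A_1$ already cover every integer in $\{12,\ldots,23\}$ not lying in $A_{12}$; since $12\in A_{12}$ one then gets all $N\ge 12$. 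The same end-cap idea is what pins down the exact values for $n=8,9,14,15,16$. So the missing ingredient is not ``enlarging the jump data'' or varying the construction instruction $\zeta$, but allowing bricks with asymmetric handle counts and using chains with caps rather than only circles.

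A secondary point: you should distinguish the even and odd cases at the outset. For odd $n$ the paper uses two handle types and the jump groupoid is as in Example~1; for even $n$ there is one handle type and the inv-tab relation $\cj(c_1)^2=1$ forces $\cG(\cJ,{\frak R})\cong C_2$. This changes what ``one copy of each handle type'' means and how bricks can be glued, and your sketch conflates the two situations.
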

We did not find improvements for $n=11$ and $13$, however.

\bew
The method of proof is always the same:  \\
For odd $n$ we use $C=\{c_1,c_2 \}$ and the following jump data for $G(2,3,n)$:
\begin{eqnarray*}
\cj (C) &:=& \{ (c_1,t,c_2) , (c_2,t,c_1 ) \} ,\\
S &:=& \{ (c_1, (st)^{(n-3)/2} s, c_1), (c_2,(st)^{(n-1)/2} s, c_2),
(c_1, (rt) ^{(n-3)/2} r, c_1), (c_2,(rt)^{(n-1)/2 }r, c_2) \} 
\end{eqnarray*}
As in example 1 we have two different handle types $H_1$ and $H_2$ and 
the jump groupoid is isomorphic to the one of example 1.
For $i=1,2$ let 
\begin{eqnarray*}
A_i &:=& \{ |\Omega | \mid \mbox{ there is a brick on $\Omega $ with a single handle, which has type } H_i \} \\
A_{12} &:=& \{ |\Omega | \mid \mbox{ there is a brick on $\Omega $ with two handles, one of each of the types } H_1, H_2 \}
\end{eqnarray*}
For even $n$, however, we use the jump data 
$$\cj (C) := \{ (c_1,t,c_1) \} ,  S = \{ (c_1, (st)^{(n-2)/2} s, c_1)
(c_1, (rt) ^{(n-2)/2} r, c_1) \} $$
and get just one handle type ($H_1$). 
As we just have one handle type, the groupoid is a group; 
The inv-tab groupoid $\cF (\cJ ) $ is isomorphic to the 
infinite cyclic group, the jump groupoid $\cG (\cJ ,{\frak R} )$ is isomorphic
to the group of order 2. 
In this case we define 
\begin{eqnarray*}
A_1 &:=& \{ |\Omega | \mid \mbox{ there is a brick on $\Omega $ with a single handle}\} \\
A_{12} &:=& \{ |\Omega | \mid \mbox{ there is a brick on $\Omega $ with two handles} \} 
\end{eqnarray*}
For convenience of notation we define $A_2:=A_1$ in this case.

In that case, whether $n$ is even or odd,
out of these bricks
 we may construct a transitive permutation representation on 
$N$ points provided that  
$$N \in \{ a+b+ \sum _{j=1}^s c_j \mid a \in A_1, b\in A_2, s\in \N_0, c_j \in A_{12} \} 
\cup \{ \sum _{j=1}^s c_j \mid  s\in \N , c_j \in A_{12} \} .$$
Let $c:= \min (A_{12}) $ be the minimum value in $A_{12}$.
As we may always add multiples of $c\in A_{12}$ it is enough to look at the 
smallest number in each congruence class modulo $c$ to get a (usually good) 
upper bound $b$ for $M(n)$. 
To compute the exact value, we need enumerate the subgroups of
index $<b$ in $G(2,3,n)$, which can be done with the brick finder algorithm 
by searching for bricks with no handles (or any other low index procedure). 
\\
In the example $n=12$, we find 
$$A_1 \supset \{ 6,7,9,10,12,13 \} , A_{12} \supset  \{ 12,15,18,21,22 \} .$$
To show that $G(2,3,12)$ contains a subgroup of index $N$ for all $N\geq 12$
we write all numbers from 12 to 23 that are not in $A_{12}$ 
in the form $a+b+c$ with $a,b\in A_1, c \in A_{12}$. 
Using low index it is easy to see that there is no subgroup of index 11.
So $M(12) = 12 $. 
\eb

\subsection{Example 4: A hyperbolic reflection group.}

This last example illustrates the power of the brick method.
It allows to construct infinite series of permutation representations where 
we may prove that the image contains  the full alternating group.

\begin{wrapfigure}{r}{0.3\textwidth}
\begin{center}
 \includegraphics[scale=0.6]{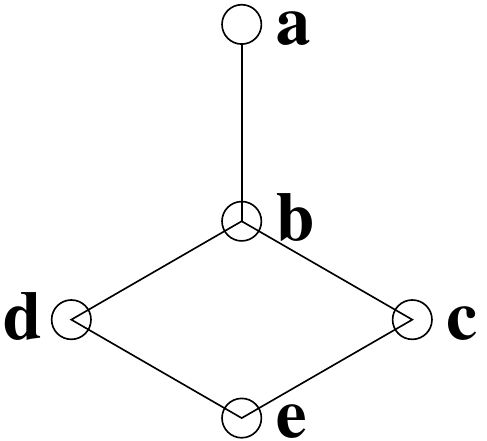}
\end{center} 
\end{wrapfigure}

As it is well known that almost all triangle groups have almost all alternating groups
as a quotient \cite{Everitt2} 
we will choose a different interesting group: 
Let $H$ be the hyperbolic reflection group, whose presentation is
given by the Dynkin diagram on the right.

\begin{theorem}\label{altquot}  
The  group 
$$H = \left\langle \begin{array}{l} 
a,b,c,d,e   \mid a^2,b^2,c^2,d^2,e^2, \\ (ab)^3, (ac)^2, (ad)^2, (ae)^2, (bc)^3,(bd)^3,(be)^2, (cd)^2,(ce)^3,(de)^3 \end{array} \right\rangle $$ 
has a permutation  representation on $N$ letters whose 
image contains the alternating group of degree $N$ for any $N\geq 703 $.
\end{theorem}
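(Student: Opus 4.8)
The plan is to apply the brick-and-mosaic machinery of Section~\ref{sec:theory} to the reflection group $H$, exactly as was done for the triangle groups in Examples~1--3, but arranging the jump data so that the jump groupoid is small and the possible mosaic sizes cover a full cofinite arithmetic progression; then to upgrade transitivity to ``contains $\Alt(N)$'' by a standard primitivity-plus-small-support argument. First I would write $X=\{a,b,c,d,e\}$ (all generators are involutions, so $X=X^{-1}$), close the relator set $R$ under inversion and rotation, and choose a single cementable generator --- say $e$ --- with two pieces of cement $c_1,c_2=\overline{c_1}$, $\xi(c_1)=\xi(c_2)=e$. The stays $S$ must be chosen so that, for each relator $r$ and each position of $e$ in $r$, the cyclic shift $r_k$ factorises compatibly with $\cJ$ in the sense of Definition~2.1; concretely, each relator involving $e$ (namely $(ae)^2,(be)^2,(ce)^3,(de)^3$) contributes a short word sitting between two occurrences of $e$, and one records these as stays $(\overline{c_i},{\sf w},c_j)$. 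One then checks the consistency condition (the words ${\sf w}$ appearing with a common source must be pairwise incomparable as prefixes --- this is where care with the $(ce)^3$ and $(de)^3$ relators is needed, and it may force the choice of \emph{two} handle types, as in Example~1) and reads off the finite set ${\frak R}$ of groupoid relators and the jump groupoid $\cG(\cJ,{\frak R})$. The aim is to get $\cG$ to be a finite cyclic group (or the groupoid analogue with two handle types whose jump groupoid is still essentially $\Z$ or $C_m$), so that, as in Remark~\ref{transitivity}, any collection of bricks joined in a circle yields a \emph{transitive} permutation representation of $H$.

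Next I would invoke the brick finder algorithm (Theorem~\ref{brickfinderTheorem}) with a suitable point bound to produce a finite list of bricks with exactly one handle of each required type, together with their shapes; set $A_i$ (respectively $A_{12}$) to be the sets of $|\Omega|$ for bricks with a single handle of type $H_i$ (respectively with one handle of each type), exactly as in the proof of Example~3. By Theorem~\ref{main} and Remark~\ref{transitivity}, for every
\[
N \in \Bigl\{\, a+b+\textstyle\sum_{j=1}^s c_j \;\Bigm|\; a\in A_1,\ b\in A_2,\ s\in\N_0,\ c_j\in A_{12}\,\Bigr\}
\;\cup\;\Bigl\{\,\textstyle\sum_{j=1}^s c_j \;\Bigm|\; s\in\N,\ c_j\in A_{12}\,\Bigr\}
\]
the group $H$ has a transitive permutation representation on $N$ letters. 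Setting $c:=\min A_{12}$, one only needs, for each residue class mod $c$, a single value of $N$ of the above form; since $A_{12}$ turns out to contain several consecutive-ish values and $A_1,A_2$ are nonempty, a short finite check (done by the implemented C program) shows that every residue is hit by some $N\le 703$, giving transitive representations on all $N\ge 703$. The numerical bound $703$ is whatever this residue-covering computation actually produces; the structure of the argument does not depend on its precise value.

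Finally, to promote ``transitive on $N$ points'' to ``image contains $\Alt(N)$'', I would argue as in Example~1: each brick of shape $(n,(h_1,h_2))$ contributes to $\phi(H)$ an element (a generator, or a suitable product of generators) that acts as a permutation of small support --- in fact the cement construction glues bricks only along the fixed points of $e$, so one of the generators, or a short word in the generators, can be arranged to move only a bounded number of points, independent of $N$, while still moving at least $3$. Combined with transitivity, and with primitivity for suitable $N$ (e.g.\ one can often ensure $N$ prime, or directly rule out block systems using the structure of the circle of bricks), Jordan's theorem --- a primitive group containing a $3$-cycle, or more generally an element of prime-power support $\le$ a small constant, contains the alternating group --- yields $\Alt(N)\le\phi(H)\le\Sym(N)$. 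The main obstacle I expect is twofold: (i) getting a jump-data design whose consistency condition holds \emph{and} whose jump groupoid is small enough that arbitrary circular gluings are legal --- the many length-$2$ relators $(ac)^2,(ad)^2,\ldots$ threaten to make the stays comparable, possibly forcing extra handle types or extra cement and complicating $\cG$; and (ii) ensuring the brick finder actually returns enough small bricks of each handle type to cover all residues mod $\min A_{12}$, which is an unavoidable finite computation whose success underlies the explicit bound. Granting the output of that computation (as the paper does for its other examples), the proof is complete.
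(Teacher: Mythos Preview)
Your setup of the jump data and the transitivity argument is essentially on the right track (the paper uses $\xi(c_1)=\xi(c_2)=e$, with stays $(c_1,w,c_1)$ for $w\in\{a,b,d,cec\}$ and $(c_2,v,c_2)$ for $v\in\{a,b,c,ded\}$, giving $\cG(\cJ,{\frak R})=\cF(\cJ)$, so bricks can indeed be glued in arbitrary circles). The bricks actually found have sizes $16,40,60,185$, each with one handle of each type.

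The genuine gap is in your passage from transitivity to ``image contains $\Alt(N)$''. Two problems:

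\emph{(i) Bounded-support elements.} Your claim that ``a short word in the generators can be arranged to move only a bounded number of points, independent of $N$'' is unsupported and, as stated, false: every generator of $H$ acts nontrivially on every brick in the circle, and there is no reason a fixed short word should act trivially on arbitrarily many bricks. The paper's mechanism is quite different and is the main idea of the proof. One singles out the $16$-point brick $B$ and searches for words $w_1,w_2$ (concretely $(ebcbed)^2$ and $(abcdec)^5$) that act \emph{trivially} on $B$ but nontrivially on $A_{40},A_{60},A_{185}$. The commutator $w=[w_1,w_2]$ then still acts trivially on every copy of $B$ in any mosaic. Building the mosaic as three copies of $B$ plus a block $M_i$ (made of $A_{40},A_{60},A_{185}$) of size $n_i\equiv i\pmod{16}$, and then padding with $N-1$ further copies of $B$, produces mosaics of every degree $16(N+2)+n_i$ in which the subgroup $U=\langle w,w^a,w^b,w^c\rangle$ has support of size $n_i+18$, independent of $N$. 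A finite Magma check shows $U$ acts as the full alternating group on that support.

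\emph{(ii) Primitivity.} Your appeal to Jordan via primitivity (``one can often ensure $N$ prime, or directly rule out block systems'') cannot work for \emph{all} $N\ge 703$. The paper avoids primitivity of $\phi(H)$ altogether: Lemma~\ref{alternating} shows that if a transitive group contains a subgroup acting as $\Alt(Y)$ on a subset $Y$ with $|Y|\ge 5$ and fixing the complement, and each generator maps some point of $Y$ into $Y$, then the whole group contains $\Alt(\Omega)$. The proof of that lemma is an induction that uses primitivity and Jordan only on the growing subset $Y'$, not on $\Omega$. Since each generator $a,b,c,d,e$ visibly moves points inside the support $Y_i$ of $U$, the lemma applies and yields $\Alt_N\le\phi(H)$ for every $N\ge 48+\max_i n_i=703$.
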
 

For the proof of this theorem we need the following easy lemma.

\begin{lemma}\label{alternating} 
Let $G = \langle g_1, g_2,\ldots ,  g_m \rangle$
 be a transitive permutation group on some finite set $\Omega $
 containing a subgroup $U\leq G$ that is the
alternating group acting naturally on a subset $Y\subset \Omega $ and 
as the identity on $\Omega \setminus Y$. 
If $|Y| \geq 5$ and each of the generators $g_i$ takes at least one point of 
$Y$ to a point of $Y$, then $G$ contains the alternating group on $\Omega $.
\end{lemma}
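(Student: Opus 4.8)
The plan is to show that the subgroup $\langle U, G_Y\rangle$, where $G_Y$ is the setwise stabiliser of $Y$ in $G$, already forces the alternating group on all of $\Omega$, and then bootstrap up using transitivity. First I would record the standard fact that $\mathrm{Alt}(Y)$ together with any permutation of $\Omega$ that moves $Y$ to a set meeting $Y$ nontrivially generates a larger alternating-type subgroup: concretely, if $g\in G$ satisfies $Yg\cap Y\neq\emptyset$, pick $y\in Y$ with $yg\in Y$; then for any $3$-cycle $\sigma$ on $Y$ fixing nothing relevant, the conjugate $g^{-1}\sigma g$ is a $3$-cycle on $Yg$, and since $|Y|\geq 5$ we can choose $\sigma$ so that $\sigma$ and $g^{-1}\sigma g$ overlap in exactly one or two points of $Y\cap Yg$. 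The subgroup generated by two $3$-cycles sharing one or two points of their supports is the alternating group on the union of their supports (this is the classical "two $3$-cycles overlapping" lemma). Iterating, $\langle U, g\rangle \supseteq \mathrm{Alt}(Y\cup Yg)$.

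Next I would apply this to the generators. By hypothesis each $g_i$ takes at least one point of $Y$ into $Y$, so the step above gives $\langle U, g_i\rangle\supseteq \mathrm{Alt}(Y\cup Yg_i)$ for every $i$. In particular $G$ contains a subgroup $V$ which is an alternating group $\mathrm{Alt}(Z)$ acting naturally on some subset $Z\supseteq Y$, fixing $\Omega\setminus Z$ pointwise, and with $|Z|\geq 5$. Now I claim $Z=\Omega$. If not, since $G$ is transitive on $\Omega$ there is a point $z\in\Omega\setminus Z$ and a generator $g_i$ and a point $w\in Z$ with $wg_i=z$, or more carefully: transitivity plus $Z\neq\emptyset$ means some generator $g_i$ carries a point of $Z$ to a point outside $Z$ (otherwise $Z$ would be a union of orbits, contradicting transitivity unless $Z=\Omega$). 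Then $Zg_i\cap Z\supseteq Yg_i\cap Y\neq\emptyset$ — wait, more directly: $Zg_i$ meets $Z$ (e.g. it contains $Yg_i\cap Y$, nonempty), and $Zg_i\not\subseteq Z$. Applying the overlap argument to $V=\mathrm{Alt}(Z)$ and $g_i$, we enlarge $V$ to $\mathrm{Alt}(Z\cup Zg_i)$ with $Z\cup Zg_i$ strictly larger than $Z$. Repeating, the support strictly increases until it exhausts $\Omega$, so $G\supseteq\mathrm{Alt}(\Omega)$.

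The main obstacle — really the only nontrivial point — is the overlap lemma: that $\langle U, g\rangle$ genuinely grows, i.e. that one can always choose a $3$-cycle $\sigma\in\mathrm{Alt}(Y)$ whose image $g^{-1}\sigma g$ overlaps it in the right way (exactly one or two common points, not zero and not all three) so as to invoke "two overlapping $3$-cycles generate the alternating group on their combined support." The condition $|Y|\geq 5$ is exactly what guarantees enough room to make this choice: with at least five points in $Y$ and at least one of them, $y$, satisfying $yg\in Y$, one has freedom to place the other two points of the support of $\sigma$ either inside or outside $Y\cap Yg$ as needed. I would cite this as a standard fact (it is the classical characterisation of primitive groups containing a $3$-cycle, due to Jordan, or can be proved by a one-paragraph direct computation), and the rest of the argument is the bookkeeping of transitivity and finite induction on $|\Omega|-|Z|$ sketched above.
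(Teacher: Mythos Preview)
Your proposal is correct and follows the same inductive skeleton as the paper: reduce to showing that $\langle U,U^{g}\rangle$ acts as $\mathrm{Alt}(Y\cup Yg)$ on $Y\cup Yg$ (and trivially outside) whenever $Y\cap Yg\neq\emptyset$, then iterate using transitivity and the fact that $Y$ stays inside each successive enlargement so the generator hypothesis persists.

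The one point of genuine difference is how that key step is justified. The paper argues abstractly: since $|Y\cup Yg|<2|Y|$ and $U$ is primitive on $Y$, Wielandt's Proposition~8.5 gives that $\langle U,U^{g}\rangle$ is primitive on $Y\cup Yg$; it then contains a prime cycle of length $\leq |Y\cup Yg|-3$, so Jordan's theorem forces it to contain $\mathrm{Alt}(Y\cup Yg)$. Your route is more hands-on, building up via pairs of overlapping $3$-cycles. That is more elementary in spirit and avoids the Wielandt citation, but as written your ``iterating'' from a single pair of overlapping $3$-cycles up to the full $\mathrm{Alt}(Y\cup Yg)$ is only sketched; to make it airtight you are implicitly using (and you say you would cite) exactly the Jordan-type fact the paper invokes. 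So the two arguments converge on the same classical input, with the paper's version being shorter to state and yours closer to a self-contained computation.
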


\bew
We proceed by induction on $|\Omega \setminus Y|$, where the 
statement is clear, if $Y=\Omega $. 
So assume that $Y \neq \Omega $. 
Since $G$ is transitive, there is a generator $g_j$ and some $\omega \in Y$ 
such that  $\omega \cdot g_j \not\in Y$.
Put $V:=\langle U, U^{g_j} \rangle $ and $Y' := Y\cup (Y\cdot g_j) $. 
Then clearly $V$ fixes all points in $\Omega \setminus Y'$ and the 
lemma follows by induction if we show that 
$V$ acts as the alternating group of degree $|Y'|$ on $Y'$. 
\\
As $U$ acts primitively on $Y$ and $|Y'| < 2|Y|$ 
the action of $V$ on $Y'$ is primitive (see also
\cite[Proposition 8.5]{Wielandt}). 
The group $U$, being the alternating group of $Y$, contains a 
prime cycle of length $p \leq |Y'| - 3$. 
Then a theorem by Jordan (1873) (\cite[Theorem 13.9]{Wielandt}) 
shows that $V$ is either the alternating or symmetric group on $Y'$. 
\eb

\bew (of Theorem \ref{altquot}).
We define the following jump data: 
\begin{eqnarray*}
C&:=&\{c_1,c_2\} , \ J:= \{ \cj(c_1):=(c_1,e,c_2), \cj(c_2)=\cj(\overline{c_1}):= (c_2,e,c_1) \}\\
S&:=& \{ (c_1,w,c_1) , (c_2,v,c_2 ) \mid w\in \{ a,b,d, cec \} , v \in \{ a,b,c,ded \} \} .\end{eqnarray*}
This gives ${\frak R} = \{ \cj(c_1)\cj(c_2) \}$, so $\cG ( \cJ, {\frak R}) = \cF (\cJ ) $.

The bricks on less than 240 points all contain both cement points with the same multiplicity. 
We find bricks $$B,A_{40},A_{60},A_{185} ,$$ 
on $16,40,60,$ and $185$ points, where this multiplicity is 1.
As the jump groupoid is free on one element, these bricks can be 
combined arbitrarily in a circle, the circles of length one leading to 
permutation representations $P_i$ of $H$ of degree $i=16,40,60,185$, respectively. 

The first step in the construction of permutation representations of $G$ as 
alternating or symmetric groups of arbitrarily large degree is to construct
representations of 16 different degrees $n_0,\ldots,n_{15}>0$, where $n_i = i\,
\bmod{16}$.
For this we proceed as follows: 
\begin{itemize} 
\item[(1)] We need two words $w_1,w_2$ in the generators of $H$ such that $P_{16}(w_i) = 1$ 
and $P_k(w_i) \neq 1$ for $k=40,60,185$. 
\\
The words $w_1=(ebcbed)^2, w_2= (abcdec)^5 $ satisfy these conditions and
pass the later checks we need as well. 
\item[(2)] Put $w:=w_1w_2w_1^{-1} w_2^{-1} $. We check
 that $P_k(w) \neq 1$ for $k=40,60,185$. 
\item[(3)] For each $i =0,1.\ldots,15$, choose integers $n_i>0,a_i,b_i,c_i \geq 0$ with $n_i=40a_i+60b_i+185c_i= i \bmod{16}$, and $n_i$ as small as possible
subject to those constraints (the maximum value of $n_i$ is $3\cdot 185+60+40=655$).  
\item[(4)] For each $n_i$, construct a mosaic on $n_i+48$ points by gluing  3 copies $B_1,B_2,B_3$ of the brick $B$, then $a_i$ copies of $A_{40}$, $b_i$ copies of $A_{60}$ and $c_i$ copies of $A_{185}$ to a circle.  

Let $M_i$ be the union of the copies of $A_{40}$, $A_{60}$ and $A_{185}$ within 
the mosaic. We can use $M_i$ (two of whose original handles are connected in the mosaic to handles in the
two copies $B_1,B_2$, say, of $B$ that are next to it in the circle, the others to other handles within $M_i$) 
as a brick with just two handles in larger mosaics.
\item[(5)] We can check that the subgroup $U:=\langle w, w^a,w^b,w^c \rangle $ acts as
 the alternating group on $n_i+18$ points of the mosaic within $(B_1,M_i,B_2)$,  and fixes the other $30$ points. 
\end{itemize} 

\begin{wrapfigure}{r}{0.25\textwidth}
\begin{center}
\vspace*{-1cm} 
 \includegraphics[scale=0.5]{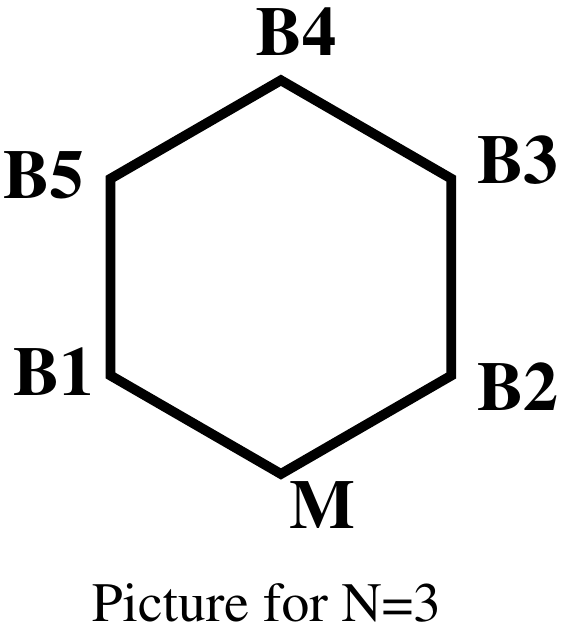}
\end{center}
\end{wrapfigure}

All the computations above were done in Magma \cite{Magma}. 

The second step of the construction is to use the 16 `bricks' $M_i$ of degrees $n_i$
to construct larger mosaics, as follows.  
For $N \in \N$, and some $i \in \{0,1,\ldots,15\}$,
 let $n=16(N+2) + n_i$.
We construct a mosaic $\phi$ on $n$ points by gluing
$N+2$ copies $B_1,\ldots,B_{N+2}$  of $B$ to the brick $M_i$ 
as shown below for $N=3$. When $N=1$, we get the mosaic defined in (4) above.

By Theorem \ref{main},
the mapping $\phi$ is a transitive permutation representation of $H$. 
As $w$ is a commutator of two elements in $H$ that fix all the points in $B$, 
$\phi(U)$ fixes all the $16N$ points of $B^N := (B_3,\ldots,B_{N+2})$.
As we have checked in (5) above $\phi (U) $ 
acts as the alternating group on a subset $Y_i$ of
$n_i+18$ points of $(B_1,M_i,B_2)$ and fixes all the other $16N+14$ points.
All generators $a,b,c,d,e$ of $H$ map some point of $Y_i$ to some other point of 
$Y_i$. 
So it follows from Lemma \ref{alternating}
that the image of $\phi$ contains the alternating group $A_{16(N+2)+n_i}$.
\eb


\end{document}